\documentclass[11pt, reqno]{amsart}
\usepackage{amsaddr}
\usepackage{latexsym,amsfonts}
\usepackage{amssymb,amsthm,upref,amscd}
\usepackage[T1]{fontenc}
\usepackage{times}
\usepackage{amscd,bigints}
\usepackage{enumerate}
\usepackage{mathrsfs}
\usepackage{amsmath}
\usepackage{color}
\usepackage{soul}
\usepackage{indentfirst}
\usepackage{comment}
\PassOptionsToPackage{reqno}{amsmath}

\usepackage{mathtools}
\mathtoolsset{showonlyrefs}

\linespread{1.0}
\textwidth 148mm \textheight 225mm
\setlength{\oddsidemargin}{0.5cm} \setlength{\topmargin}{0cm}
\setlength{\footskip}{1.2cm}

\newtheorem{thm}{Theorem}[section]

\newtheorem{defi}{Definition}[section]
\newtheorem{lem}{Lemma}[section]

\newtheorem{rem}{Remark}[section]

\newcommand{\R}{\mathbb{R}}

\numberwithin{equation}{section}
\newcommand{\N}{\mathbb{N}}
\newcommand{\Z}{\mathbb{Z}}

\newcommand{\G}{\mathbb{G}}

\newcommand{\eps}{\epsilon}

\newcommand{\X}{X^{s, p}_0(\Omega)}
\newcommand{\wto}{\rightharpoonup}

\makeatletter \@addtoreset{equation}{section} \makeatother

\usepackage[
left=30mm,
right=30mm,
top=25.4mm,
bottom=25.4mm,
headheight=2.17cm,
headsep=4mm,
footskip=12mm,
heightrounded,]{geometry}

\usepackage[colorlinks=true,urlcolor=blue,
citecolor=black,linkcolor=black,linktocpage,pdfpagelabels,
bookmarksnumbered,bookmarksopen]{hyperref}
\usepackage{hyperref}
\newcounter{const}
\setcounter{const}{0}

\title[Embedding Theorems and Applications]{\small Fractional Morrey-Sobolev type embeddings and nonlocal subelliptic problems with oscillating nonlinearities on stratified Lie groups}

\author{\small Sekhar Ghosh${}^1$, Tianxiang Gou${}^2$, Vishvesh Kumar${}^3$, Vicen\c tiu D. R\u adulescu${}^{4,5,6,7}$}

\thanks{Emails: sekharghosh1234@gmail.com (S. Ghosh), tianxiang.gou@xjtu.edu.cn (T. Gou), vishvesh.kumar@ugent.be (V. Kumar), radulescu@inf.ucv.ro (V.D. R\u adulescu). }
\thanks{ ${}^1$ Department of Mathematics, National Institute of Technology Calicut, Kozhikode 673601, Kerala, India.}
\thanks{ ${}^2$ School of Mathematics and Statistics, Xi’an Jiaotong University, 710049, Xi’an, Shaanxi, China.}
\thanks{ ${}^3$ Department of Mathematics: Analysis, Logic and Discrete Mathematics, Ghent University, Ghent, Belgium.}
\thanks{ ${}^4$ Faculty of Applied Mathematics, AGH University of Krak\'ow, 30-059 Krak\'ow, Poland.}
\thanks{ ${}^5$ Brno University of Technology, Faculty of Electrical Engineering and Communication, Brno 61600, Czech Republic.}
\thanks{ ${}^6$ Department of Mathematics, University of Craiova, 200585 Craiova, Romania “Simion Stoilow" Institute of Mathematics of the Romanian Academy, P.O. Box 1-764, 014700 Bucharest, Romania.}
\thanks{ ${}^7$ Department of Mathematics, Zhejiang Normal University, 321004 Jinhua, Zhejiang, China.}


\begin{document}

\begin{abstract} 
 
In this paper, we establish the fractional Morrey-Sobolev type embeddings on stratified Lie groups. 
This extends and complements the Sobolev type embeddings derived in \cite{GKR}. 
As an application of the results, we study the following nonlocal subelliptic problem,   
\begin{equation} 
\begin{cases} 
(-\Delta_{\mathbb{G}, p})^s u= \lambda \beta(x) g(u) & \text{in} \quad \Omega, \\
u=0\quad & \text{in}\quad \mathbb{G}\backslash \Omega,
\end{cases}
\end{equation}
where $0<s<1<p<\infty$ with $ps\geq Q$, $Q$ is the homogeneous dimension of the stratified Lie group $\mathbb{G}$, $(-\Delta_{\mathbb{G}, p})^s$ is the fractional $p$-sub-Laplacian on $\mathbb{G},$  $\Omega$ is an open bounded subset of $\mathbb{G}$, $ \lambda$ is a positive real parameter, $\beta \in L^\infty(\Omega, \R_{>0})$ and $g \in C(\mathbb{R}, \R) $ oscillates near the origin or at infinity. By using the variational principle of Ricceri, we prove the existence and asymptotic behaviors of infinitely many solutions to the problem under consideration. We emphasize that the results obtained here are also novel for $\mathbb{G}$ being the Heisenberg group and $p=2$.

\smallskip\noindent{\sc Key words:} Fractional Morrey-Sobolev embeddings; Oscillating nonlinearities; Stratified Lie groups, Variational methods, Heisenberg groups; Fractional $p$-sub-Laplacian.

\smallskip\noindent{\sc 2020 Mathematics Subject Classification:} 35R03, 35H20, 35R11,  47J30.
\end{abstract}

\maketitle
\tableofcontents
\thispagestyle{empty}
\allowdisplaybreaks
\section{Introduction and main results}

During recent years, nonlocal elliptic partial differential equations (PDEs) and related subjects have been well explored in the Euclidean setting. Nonlocal elliptic PDEs and the corresponding Sobolev spaces are experiencing impressive applications in different fields, which involve fractional models like the $\alpha$-stable L\'evy processes in probability, quantum mechanics, finance, optimization and image processing, we refer the readers to \cite{AMRT, DL, Fd, T} and references therein. 

In their seminal work \cite{RS}, Rothschild and Stein demonstrated that a general H\"ormander’s sums of squares of vector fields on manifolds can be approximated by a sub-Laplacian on some stratified Lie group, which accordingly highlights the importance of nilpotent Lie groups in deriving sharp subelliptic estimates for differential operators on manifolds, see also \cite{DGN, Fo1, G, Ro}.   It turns out later that Rothschild and Stein's results are crucial to study PDEs on stratified Lie groups and lead to numerous interesting and promising works which merge Lie group theories with the analysis of PDEs, see for example \cite{BFG, BF, CDG, CCR, DGN, FF,GKR, GKR23, GKR24, MM, RS, RTY} and references therein. On the other hand, stratified Lie groups model physical systems with constrained dynamics, where movement is confined to particular directions within the tangent space (sub-Riemannian geometry, in contrast to Riemannian geometry), as highlighted in Cartan's groundbreaking work \cite{Eli}. Following the aforementioned results, there exists a rapid growth of interest in the study of sub-Laplacians on stratified Lie groups in recent years due to the relevance not only in theoretical settings but also in practical applications, such as mathematical models of crystal materials and human vision, see \cite{Ch, CMS} and references therein.
The study of these Lie groups poses significant challenges, and many fundamental questions regarding their analytical and geometric properties remain unresolved.

The embeddings of Sobolev spaces constitute an important part of functional analysis and geometry, which have a wide range of remarkable applications in the theories associated to PDEs, calculus of variations and mathematical physics. In particular, they
are one of the most important tools to consider solutions to PDEs and play an essential role in the discussion of the existence and regularity of solutions. 
We refer the readers to \cite{DPV} and references therein for a complete survey of the fractional Sobolev spaces $W^{s,p}(\Omega)$ and the properties of the fractional $p$-Laplacian along with applications to nonlocal PDEs in the Euclidean setting, where $0<s<1 \leq p<+\infty$ and $\Omega \subset \R^N$ is an open subset. See also the monographs \cite{Br, Ev} for the relevant topics developed for the classical Sobolev spaces $W^{k,p}(\Omega)$, where $k \in \Z_{>0}$.

The Sobolev spaces (also known as Folland-Stein spaces) on stratified Lie groups were first introduced by Folland \cite{Fo}. Further properties concerning the Sobolev spaces were later developed in the book by Folland and Stein \cite{FS}. 
Recently, Kassymov and Suragan \cite{KS} studied the fractional Sobolev and Hardy type inequalities on homogeneous Lie groups (see also \cite{RS1}). Moreover,  Adimurthi and Mallick \cite{AM} investigated the fractional Sobolev type inequalities and the Sobolev embeddings for weighted fractional Sobolev spaces on the Heisenberg groups, where the compact embeddings were derived for extension domains. The definition of extension domains is given by Definition \ref{defextension}. The extension property of bounded domains plays a crucial role in the consideration of the Sobolev embeddings into Lebesgue spaces, which has been clarified in \cite{DPV}. It is worth mentioning the work due to Zhou \cite{Zh}, where the author studied the characterizations of $(s, p)$-extension domains and embedding domains for fractional Sobolev spaces in Euclidean space $\R^N$. However, to the best of our knowledge, it is unclear whether such characterizations for an arbitrary bounded domain in the framework of stratified Lie groups remain valid. Indeed, since the existence of characteristic points, then the problem of finding classes of extension domains in stratified Lie groups is highly nontrivial and there are essentially no examples for Step 3 and higher, see \cite{CG}. For this reason, one is forced to investigate the embeddings of the fractional Sobolev space $X_0^{s,p}(\Omega)$ on stratified Lie group with vanishing trace, where $X_0^{s,p}(\Omega)$ denotes the fractional Sobolev space defined by \eqref{defx} equipped with the norm given by \eqref{norm}. Lately, when $ps<Q$, Ghosh et al. \cite{GKR} successfully established the Sobolev embedding of $X_0^p(\Omega)$ into Lebesgue spaces, see Theorem \ref{thm0}. While $sp \geq Q$, the problem was left open so far. The first aim of the paper is to prove the embeddings for the case $sp \geq Q$, which extends and complements the embeddings obtained in \cite{GKR}. 

\begin{thm} \cite[Theorem 1]{GKR} \label{thm0}
Let $\G$ be a stratified Lie group of homogeneous dimension $Q$ and $\Omega \subset \G$ be an open subset. Let $0<s<1 \leq p<+\infty$ and $ps<Q$. Then $X_0^{s,p}(\Omega)$ is continuously embedded into $L^r(\Omega)$ for any $p \leq r \leq p_s^*:=\frac{Qp}{Q-sp}$, i.e. there exists a constant $C=C(Q, s, p, \Omega)>0$ such that, for any $u \in X_0^{s, p}(\Omega)$,
\begin{align*}
\|u\|_{L^r(\Omega)} \leq C \|u\|_{X_{0}^{s, p}(\Omega)}.
\end{align*}
Moreover, if $\Omega$ is a bounded domain, then $X_0^{s,p}(\Omega)$ is compactly embedded into $L^r(\Omega)$ for any $1 \leq r < p_s^*$.
\end{thm}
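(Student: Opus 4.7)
The plan is to reduce the embedding to a global fractional Sobolev inequality on $\G$. Since functions in $\X$ vanish outside $\Om$ by definition, they extend by zero to elements of the global fractional Sobolev space on $\G$, and it suffices to establish $\|u\|_{L^{p_s^*}(\G)} \leq C[u]_{s,p,\G}$ for such extended functions, where $[u]_{s,p,\G}$ denotes the Gagliardo seminorm on the whole group. The subcritical cases $p \leq r < p_s^*$ will then follow by log-convexity interpolation between $L^p(\Om)$ and $L^{p_s^*}(\Om)$, since the $\X$-norm controls the $L^p$-norm by its very definition.

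For the critical inequality on $\G$, I would adapt Hedberg's method to the sub-Riemannian setting. Fix a left-invariant homogeneous norm $d$ on $\G$; the triple $(\G, d, dx)$ is then a space of homogeneous type of dimension $Q$ in the sense of Coifman-Weiss, so the Hardy-Littlewood maximal operator $M$ defined via $d$-balls is bounded on $L^p(\G)$ for every $p > 1$. Using a dyadic annular decomposition of $u$ on the ball $B(x, r)$ combined with H\"older's inequality on each annulus, I would derive a pointwise estimate of the form
\begin{equation*}
|u(x)| \leq C r^s (Mu)(x) + C r^{s - Q/p} [u]_{s, p, \G},
\end{equation*}
valid for every $r > 0$ (the underlying geometric series converges precisely because $sp < Q$). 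Optimizing the choice of $r$ balances the two terms and yields the bound $|u(x)|^{p_s^*} \leq C (Mu)(x)^p [u]_{s,p,\G}^{p_s^* - p}$. Integrating over $\G$ and invoking the $L^p$-boundedness of $M$ produces the desired critical embedding.

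For compactness when $\Om$ is bounded, I would invoke a Fr\'echet-Kolmogorov type criterion for $L^p(\G)$ adapted to the group setting. Let $(u_n)$ be a bounded sequence in $\X$: boundedness in $L^r(\Om)$ follows from the continuous embedding established above, and equi-tightness is automatic since every $u_n$ vanishes outside the bounded set $\Om$. A left-invariant change of variables rewrites the Gagliardo seminorm as
\begin{equation*}
[u]_{s,p,\G}^p = \int_{\G} \frac{\|u(\cdot\, z) - u\|_{L^p(\G)}^p}{d(z)^{Q+sp}}\, dz,
\end{equation*}
which furnishes uniform integral control on the $L^p$-modulus of continuity. A standard truncation of the $z$-integration then yields equi-continuity under small translations, giving pre-compactness in $L^p(\Om)$; interpolation with the uniform $L^{p_s^*}$-bound upgrades this to pre-compactness in $L^r(\Om)$ for every $1 \leq r < p_s^*$.

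The main technical obstacle is the Hedberg step: the dyadic decomposition and the resulting pointwise estimate must be carried out with respect to group-balls $B(x, r)$ and the non-commutative displacement $y \mapsto y^{-1}x$ rather than Euclidean translations, and one must handle the endpoint $p=1$ separately (where $M$ is only of weak type). Fortunately, since $(\G, d, dx)$ is a space of homogeneous type and the Folland-Stein calculus on stratified groups supplies the covering and Vitali-type lemmas needed, the combinatorial structure of the Euclidean Hedberg argument transfers with essentially only notational changes, the key arithmetic substitution being the homogeneous dimension $Q$ in place of the topological dimension $N$ throughout.
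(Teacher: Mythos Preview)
This theorem is not proved in the present paper; it is quoted from \cite{GKR} and used as a black-box input for the new results (Theorems~\ref{thm1} and~\ref{thm2}), so there is no in-paper argument to compare your proposal against.

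One remark on your sketch nonetheless. The pointwise Hedberg bound you state, $|u(x)| \leq C r^s\, Mu(x) + C r^{s-Q/p}[u]_{s,p,\G}$, is not correct as written. The classical Hedberg mechanism bounds a Riesz potential $I_s f$ in terms of $Mf$, but here you have no representation $u = I_s f$; and the obvious splitting $u(x) = (u(x)-u_{B(x,r)}) + u_{B(x,r)}$ yields $|u_{B(x,r)}| \leq Mu(x)$ with \emph{no} factor of $r^s$, so there is nothing to optimize away. The argument is salvaged if you replace $Mu(x)$ by the pointwise Gagliardo function
\[
g(x) := \Bigl(\int_{\G} \frac{|u(x)-u(y)|^p}{|y^{-1}x|^{Q+sp}}\,dy\Bigr)^{1/p}:
\]
a dyadic decomposition of $B(x,r)$ then gives $|u(x)-u_{B(x,r)}| \lesssim r^s g(x)$, while an \emph{outward} telescoping sum over balls $B(x,2^k r)$ (convergent precisely because $sp<Q$) gives $|u_{B(x,r)}| \lesssim r^{s-Q/p}[u]_{s,p,\G}$. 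Optimizing in $r$ and integrating now uses only $\|g\|_{L^p(\G)} = [u]_{s,p,\G}$, and the maximal inequality is never needed---which also disposes of your worry about the endpoint $p=1$. Your interpolation and Fr\'echet--Kolmogorov compactness steps are fine as outlined.
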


The main results of the paper regarding the embeddings for the case $sp \geq Q$ are addressed as follows, which can be seen as the counterparts of the ones in the Euclidean setting. More precisely, when $Q=ps$, we have the following result.

\begin{thm} \label{thm1}
Let $\G$ be a stratified Lie group of homogeneous dimension $Q$ and $\Omega \subset \G$ be an open subset. Let $0<s<1 \leq p<+\infty$ and $Q=ps$. Then $X_0^{s,p}(\Omega)$ is continuously embedded into $L^r(\Omega)$ for any $p \leq r < + \infty$, i.e. there exists a constant $C=C(Q, s, p, \Omega)>0$ such that, for any $u \in X_0^{s, p}(\Omega)$,
\begin{align} \label{emthm1}
\|u\|_{L^r(\Omega)} \leq C \|u\|_{X_{0}^{s, p}(\Omega)}.
\end{align}
Moreover, if $\Omega$ is a bounded domain, then $X_0^{s,p}(\Omega)$ is compactly embedded into $L^r(\Omega)$ for any $1 \leq r < + \infty$.
\end{thm}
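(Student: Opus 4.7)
The strategy is to reduce the critical case to the subcritical one already handled by Theorem \ref{thm0}. For any $s'\in(0,s)$ one has $s'p<Q=sp$, so Theorem \ref{thm0} gives the continuous embedding $X_0^{s',p}(\Omega)\hookrightarrow L^r(\Omega)$ for $p\le r\le p_{s'}^*=\tfrac{Qp}{Q-s'p}$, together with compactness for $r<p_{s'}^*$ when $\Omega$ is bounded. Since $p_{s'}^*\to +\infty$ as $s'\to s^-$, every finite exponent $r\ge p$ is reached by a suitable choice of $s'$. The key step is therefore the continuous inclusion
\begin{equation}\label{planstep}
X_0^{s,p}(\Omega)\hookrightarrow X_0^{s',p}(\Omega)\qquad\text{for every }s'\in(0,s).
\end{equation}

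To prove \eqref{planstep} I view $u\in X_0^{s,p}(\Omega)$ as extended by zero outside $\Omega$ and split the Gagliardo double integral defining $[u]_{s',p}^{p}$ according to whether $d(y^{-1}x)\le R$ or $d(y^{-1}x)>R$, where $R>0$ is a fixed auxiliary constant. On the near region, the pointwise identity
\[
\frac{1}{d(y^{-1}x)^{Q+s'p}}=\frac{d(y^{-1}x)^{(s-s')p}}{d(y^{-1}x)^{Q+sp}}\le \frac{R^{(s-s')p}}{d(y^{-1}x)^{Q+sp}}
\]
bounds that contribution by $R^{(s-s')p}[u]_{s,p}^{p}$. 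On the far region I apply $|u(x)-u(y)|^{p}\le 2^{p-1}(|u(x)|^{p}+|u(y)|^{p})$, exchange the order of integration via Fubini, and use left-invariance of the Haar measure together with polar coordinates on $\G$ to evaluate
\[
\int_{\{z\in\G:\,d(z)>R\}}\frac{dz}{d(z)^{Q+s'p}}=\frac{C(Q)}{s'p\,R^{s'p}},
\]
which controls that contribution by a constant multiple of $\|u\|_{L^p(\Omega)}^{p}$. Combined with the bound $\|u\|_{L^p(\Omega)}\le C\|u\|_{X_0^{s,p}(\Omega)}$ coming from the definition of the norm in \eqref{norm}, this yields \eqref{planstep}.

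The continuous part of Theorem \ref{thm1} then follows by composing \eqref{planstep} with Theorem \ref{thm0} for an $s'=s'(r)\in(0,s)$ satisfying $p_{s'}^*\ge r$. For the compact embedding on bounded $\Omega$ and finite $r\ge p$, I pick $s'\in(0,s)$ with $p_{s'}^*>r$, so the compactness statement of Theorem \ref{thm0} gives a compact arrow $X_0^{s',p}(\Omega)\hookrightarrow L^r(\Omega)$; composing with the continuous inclusion \eqref{planstep} preserves compactness. The residual range $1\le r<p$ is absorbed by the trivial continuous inclusion $L^p(\Omega)\hookrightarrow L^r(\Omega)$ on the bounded set $\Omega$. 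The principal subtlety I anticipate is the clean derivation of \eqref{planstep}: although the Euclidean analogue is classical, on stratified Lie groups one must carefully exploit the noncommutative group law and the polar decomposition associated with the homogeneous quasi-distance $d$ when manipulating the Gagliardo double integral, and verify that the definitions \eqref{defx}--\eqref{norm} of $X_0^{s,p}(\Omega)$ are compatible with the zero-extension argument.
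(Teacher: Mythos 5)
Your proposal is correct, and it follows the paper's high-level strategy — lower the smoothness index from $s$ to $s'<s$, show the continuous inclusion $X_0^{s,p}(\Omega)\hookrightarrow X_0^{s',p}(\Omega)$, and then invoke Theorem~\ref{thm0}. You also handle the far-region ($|z|>R$) contribution in essentially the same way as the paper: the elementary inequality $|u(x)-u(y)|^p\le 2^{p-1}(|u(x)|^p+|u(y)|^p)$, Fubini and left-invariance of the Haar measure, and the integrability of $|z|^{-(Q+s'p)}$ at infinity via polar coordinates, combined with the Poincar\'e inequality \eqref{pi}.

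Where you genuinely diverge from the paper is in the near-region estimate. The paper rewrites
$$\int_{\{|z|\le 1\}}\int_{\G}\frac{|u(xz)-u(x)|^p}{|z|^{Q+s'p}}\,dx\,dz=\int_{\{|z|\le 1\}}\Bigl(\int_{\G}\frac{|u(xz)-u(x)|^p}{|z|^{sp}}\,dx\Bigr)\frac{dz}{|z|^{Q-(s-s')p}}$$
and then invokes the mollification/group-convolution estimate of Lemma~\ref{fem}, which says $\sup_{|z|>0}\int_{\G}\frac{|u(xz)-u(x)|^p}{|z|^{sp}}\,dx\lesssim[u]^p_{s,p,\G}$, to bound the inner integral uniformly in $z$; the Pansu differentiability and the choice of a cut-off $\chi$ with $\int_\G\nabla_\G\chi=0$ are the engine behind that lemma. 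Your argument avoids this machinery entirely by noting the one-line pointwise inequality
$$\frac{1}{|z|^{Q+s'p}}=\frac{|z|^{(s-s')p}}{|z|^{Q+sp}}\le\frac{R^{(s-s')p}}{|z|^{Q+sp}}\qquad\text{for }|z|\le R,$$
which gives the near-region contribution directly as a multiple of $[u]^p_{s,p,\G}$. This is both more elementary and more self-contained, as it does not require the auxiliary translation-difference estimate; the trade-off is that the paper's route exposes Lemma~\ref{fem}, a Nikolskii--Besov-type characterization of independent interest that emphasizes the group structure (convolution, dilations, Pansu differentiability) the authors wish to highlight. Both establish the same inclusion with constants of the same quality, and the compactness part (composing the continuous inclusion with the compact embedding of Theorem~\ref{thm0}, plus the trivial $L^p\hookrightarrow L^r$ for $1\le r<p$ on bounded $\Omega$) is handled the same way in both.
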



While $Q<ps$, we obtain the following result.

\begin{thm} \label{thm2}
Let $\G$ be a stratified Lie group of homogeneous dimension $Q$ and $\Omega \subset \G$ be an open subset. Let $0<s<1 \leq p<+\infty$ and $Q<ps$. Then $X_0^{s,p}(\Omega)$ is continuously embedded into $C^{0, \alpha}(\Omega)$ for $\alpha=\frac{sp-Q}{p}$, i.e. there exists a constant $C=C(Q, s, p, \Omega)>0$ such that, for any $u \in X_0^{s, p}(\Omega)$,
\begin{align} \label{emthm2}
\|u\|_{C^{0, \alpha}(\Omega)} \leq C \|u\|_{X_{0}^{s, p}(\Omega)}.
\end{align}
Moreover, if $\Omega$ is bounded, then $X_0^{s,p}(\Omega)$ is compactly embedded into $C^{0, \beta}(\overline{\Omega})$ for any $0<\beta<\alpha$.
\end{thm}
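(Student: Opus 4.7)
The plan is to adapt the classical fractional Morrey inequality (see \cite{DPV}) to the stratified Lie group $(\G,d)$, exploiting the scaling $|B(x_0,R)| = cR^Q$ of the Haar measure and the triangle inequality for $d$. Since functions in $X_0^{s,p}(\Om)$ vanish outside $\Om$ by definition, I would extend $u$ by zero to all of $\G$, which preserves both $\|u\|_{L^p}$ and the Gagliardo seminorm entering the $X_0^{s,p}(\Om)$ norm, and work with that extension throughout.

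The core ingredient is a local Poincar\'e-type estimate: for any ball $B = B(x_0,R) \subset \G$ with mean $\bar u_B = |B|^{-1}\int_B u\,dz$,
\begin{equation*}
\frac{1}{|B|}\int_B|u-\bar u_B|\,dz \le \frac{1}{|B|^2}\iint_{B\times B}|u(z)-u(w)|\,dz\,dw \le C\,R^{\al}\,[u]_{s,p,B},
\end{equation*}
where $\al = (sp-Q)/p$ and $[u]_{s,p,B}$ denotes the Gagliardo seminorm localized to $B\times B$. This follows from H\"older with exponents $p$ and $p/(p-1)$ against the weight $d(z,w)^{(Q+sp)/p}$, together with $d(z,w)\le 2R$ on $B\times B$ and $|B|\sim R^Q$. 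Fixing two Lebesgue points $x,y\in\G$ with $r = d(x,y)$, I would telescope along the dyadic balls $B_k^x = B(x,r/2^k)$, using $|B_k^x|/|B_{k+1}^x|=2^Q$ and the Poincar\'e estimate to obtain
\begin{equation*}
|u(x)-\bar u_{B(x,r)}| \le \sum_{k\ge 0}|\bar u_{B_{k+1}^x}-\bar u_{B_k^x}| \le C\sum_{k\ge 0}(r/2^k)^{\al}\,[u]_{s,p,\G} \le C\,r^{\al}\,[u]_{s,p,\G},
\end{equation*}
and analogously for $y$. The discrepancy $|\bar u_{B(x,r)}-\bar u_{B(y,r)}|$ is bridged by inserting the mean over the overlap $B(x,r)\cap B(y,r)$, whose Haar measure is $\gtrsim r^Q$ because it contains a ball of radius $r/4$ around the midpoint of $x$ and $y$. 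The geometric series converges precisely because $\al>0$, which is the hypothesis $sp>Q$, producing a continuous H\"older representative of $u$ satisfying $|u(x)-u(y)|\le C d(x,y)^{\al}[u]_{s,p,\G}$.

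To upgrade this seminorm control to the full norm bound \eqref{emthm2}, when $\Om$ is bounded I would pick any $y\in\G\setminus\overline{\Om}$, where $u(y)=0$, giving $\|u\|_{L^\infty(\Om)}\le C(\operatorname{diam}\Om)^{\al}[u]_{s,p,\G}$; the general open case absorbs the dependence on $\Om$ into the constant as in the statement. For the compact embedding with $\Om$ bounded, given a bounded sequence $\{u_n\}\subset X_0^{s,p}(\Om)$, the continuous embedding makes $\{u_n\}$ bounded in $C^{0,\al}(\overline{\Om})$, so Arzel\`a--Ascoli yields a subsequence converging uniformly to some $u$. For any $\be<\al$, the standard interpolation
\begin{equation*}
[u_n-u]_{C^{0,\be}(\overline{\Om})} \le \bigl(2[u_n-u]_{C^{0,\al}(\overline{\Om})}\bigr)^{\be/\al}\,\|u_n-u\|_{L^\infty(\Om)}^{1-\be/\al}
\end{equation*}
then upgrades the uniform convergence to convergence in $C^{0,\be}(\overline{\Om})$.

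The main delicate point I expect is the combination of the local Poincar\'e estimate with the geometric claim $|B(x,r)\cap B(y,r)|\gtrsim r^Q$: the first hinges on the correct cancellation of powers of $R$ after the H\"older step, while the second uses existence of a midpoint $m$ with $d(x,m)=d(y,m)\le r/2$, a standard consequence of $d$ being a true geodesic (CC-)distance on $\G$. Once these ingredients are secured, the rest reduces to routine telescoping and the classical Arzel\`a--Ascoli argument.
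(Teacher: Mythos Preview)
Your proposal is correct and follows essentially the same route as the paper: a Campanato--Morrey argument (local mean-oscillation bound via H\"older against the Gagliardo kernel, dyadic telescoping of averages, bridging the means at $x$ and $y$ through an overlap ball), followed by Arzel\`a--Ascoli and H\"older interpolation for compactness. The paper packages the first part as a separate lemma on all of $\G$ (its Lemma~\ref{fem1}) and handles the cross-term by using balls of radius $2r$ rather than your midpoint argument, but these are cosmetic differences.

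One small point to tighten: your $L^\infty$ bound via ``pick $y\notin\overline{\Omega}$ and use $(\operatorname{diam}\Omega)^{\alpha}$'' only works for bounded $\Omega$, and the vague ``absorb into the constant'' for general open $\Omega$ does not stand on its own. The paper instead bounds $|u(x)|$ by $|u(x)-\bar u_{B(x,r_0)}| + |\bar u_{B(x,r_0)}|\lesssim r_0^{\alpha}[u]_{s,p,\G} + r_0^{-Q/p}\|u\|_{L^p(\G)}$ for any fixed $r_0>0$, which gives $\|u\|_{L^\infty}\lesssim \|u\|_{L^p(\Omega)}+[u]_{s,p,\G}=\|u\|_{X_0^{s,p}(\Omega)}$ without any boundedness assumption on $\Omega$. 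Also note that your midpoint claim uses that $d$ is a genuine geodesic metric; the paper's quasi-norm need not be, but using balls of radius $2r$ (or switching to the equivalent Carnot--Carath\'eodory distance) sidesteps this.
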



To establish Theorems \ref{thm1} and \ref{thm2}, the key lies in making use of group structures such as the group translation and regularization process via group convolution and dilations for this particular setting of stratified Lie groups. 

In the sequel, we will present applications of the embedding theorems. More precisely, as an application of Theorem \ref{thm2}, we are going to establish the existence and asymptotic behaviors of solutions to the following nonlocal subelliptic nonlinear problem,
\begin{equation} \label{pro1intro}
\begin{cases} 
(-\Delta_{\mathbb{G}, p})^s u= \lambda \beta(x) g(u) & \text{in} \quad \Omega, \\
u=0\quad & \text{in}\quad \mathbb{G}\backslash \Omega,
\end{cases}
\end{equation}
where $0<s<1$, $p<1$, $sp>Q$, $Q$ is the homogeneous dimension of the stratified Lie group $\G$, $(-\Delta_{\mathbb{G}, p})^s$ is the fractional $p$-sub-Laplacian on $\mathbb{G}$ defined by
$$
\left(-\Delta_{p, \mathbb{G}}\right)^s u(x):= C(Q, s, p) P. V. \int_{\mathbb{G}}\frac{|u(x)-u(y)|^{p-2}(u(x)-u(y))}{|y^{-1}x|^{Q+ps}} \,dy,
$$
$\Omega$ is an open bounded subset of $\mathbb{G}$, $ \lambda$ is a positive real parameter, $\beta \in L^\infty(\Omega)$ with $\beta_0:= \text{essinf}_{x \in \Omega} \beta (x)>0$ and $g: \mathbb{R} \rightarrow \mathbb{R}$ is a continuous function which oscillates near the origin or at infinity. Here the symbol P.V. stands for the Cauchy principal value.

Nonlocal problems like \eqref{pro1intro} have been intensively considered in the Euclidean setting during recent decades. For the pertain consideration, we refer the readers to \cite{ADM18, BB1, MP, BRS, GMR, KM, KMT, OO, OZ} and references therein. However, as per our knowledge, there exists no literature devoted to the study of solutions to \eqref{pro1intro} in the setting of stratified Lie groups. In the current setting, the main challenge in the discussion is that we need to deal with the nonlocal and nonlinear nature of the operator $\left(-\Delta_{p, \mathbb{G}}\right)^s$, as well as the non-Euclidean geometry feature of the stratified Lie group $\G$. It is worth highlighting that our forthcoming results are also new in the setting of the Heisenberg group even for $p=2.$ We note that for $p=2$, the operator $\left(-\Delta_{p, \mathbb{G}}\right)^s$ corresponds to the fractional sub-Laplacian 
$$
(-\Delta_\G)^s:=\lim_{\epsilon \rightarrow 0} \int_{\G \backslash B(x, \epsilon)} \frac{|u(x)-u(y)|}{|y^{-1}x|^{Q+2s}} \,dy = C(Q, s) P.V. \int_{\G} \frac{|u(x)-u(y)|}{|y^{-1}x|^{Q+2s}}\,dy,
$$
where $C(Q, s)>0$ is a constant. Problems driven by the operator $\left(-\Delta_{\mathbb{G}}\right)^s$, which are connected with various areas of mathematics, have been well studied recently in several intriguing papers \cite{BF13, FMPPS18, GLV, K20, RT16, RT20} along with references cited therein. It was proved in \cite{RT16,RT20} that  the operator $(-\Delta_\G)^s$ is a multiple of a ``conformal invariant" operator on $H$-type groups, which holds significance in CR geometry (see \cite{FMMT15}). In contrast to the Euclidean space,  the operator $(-\Delta_\G)^s$ does not coincide with the standard fractional power $-\Delta_{\G}^s$ of the sublaplacian $-\Delta_{\G}$ in the Heisenberg groups or general $H$-type groups defined by
$$
(- \Delta_{\G}^su)(x):=-\frac{s}{\Gamma (1-s)}\int_0^\infty \frac{1}{t^{1+s}}(H_tu(x)-u(x))\,dt,
$$
where $H_t:=e^{-t \Delta_\G}$ is the heat semigroup constructed by Folland \cite{Fo}.  In the recent paper of Garofalo and Tralli \cite{GT21}, they computed explicitly the fundamental solutions of the nonlocal operators $(-\Delta_\G)^s$ and $(- \Delta_{\G}^su)$ and established some intertwining formulas on $H$-type groups. For the study of the regularity theory of subelliptic nonlocal equations involving $(-\Delta_{\mathbb{G}, p})^s,$ one may consult recent articles \cite{MMPP23, PP22, Pic22, FZ24} and references therein.  
 
To address our results regarding the existence and asymptotic behaviors of solutions to \eqref{pro1intro}, we will introduce some notations. Define
$$
G(\tau):=\int_0^{\tau} g(\xi) \,d\xi, \quad \forall \,\, \tau \in \R,
$$
$$
A_M:= \liminf_{\tau \rightarrow M} \frac{\max_{|\xi| \leq \tau} G(\xi)}{\tau^p}, \quad
B_M:= \limsup_{\tau \rightarrow M} \frac{G(\tau)}{\tau^p}, \quad M \in \left\{0^+, +\infty \right\}.
$$ 
In addition, we define
$$
D
:= \left( \frac{r^{sp}}{2^Q C_{p, Q, s} L^p |\Omega| \omega_Q} \right) \frac{\beta_0}{\|\beta\|_{L^\infty(\Omega)}},
$$
$$ 
\lambda_{1,M}:=\frac{2^Q C_{p, Q, s} \omega_Q}{p r^{sp} \beta_0 B_M}, \quad \lambda_{2,M}:= \frac{1}{p \|\beta\|_{L^\infty(\Omega)} |\Omega| L^p A_M},
$$
where $|\Omega|$ and $\omega_Q$ denote respectively the Haar measure of $\Omega$ and $Q-1$ dimensional surface measure of the unit quasi-sphere with respect quasi-norm $|\cdot|$,
$$
C_{p, Q, s}:= \left(\frac{2^{p(2-s)}}{(1-s)p}+\frac{2^{2+ps}}{ps}\right)\left(1-\frac{1}{2^Q}\right)+\frac{2^{1+ps-Q}}{ps},
$$
$$
r:=\sup_{x \in \Omega} \text{dist}(x, \partial \Omega), \quad 
L:=\sup\left\{\frac{\|u\|_{L^\infty(\Omega)}}{\|u\|_{X_0^{s,p}(\Omega)}}: u \in X_0^{s, p}(\Omega) \backslash \{0\} \right\}.
$$


\begin{thm} \label{mainthmintrnon} 
Let $\G$ be a stratified Lie group of the homogeneous dimension $Q$ and $\Omega \subset \G$ be a bounded open subset. Let $0<s<1$, $p>1$ and $sp>Q$. 
Suppose that 
$
\inf_{\tau \geq 0} G(\tau)=0
$
and
$
A_M<DB_M.
$
Then, for every $\lambda \in (\lambda_{1,M}, \lambda_{2,M})$, 
\eqref{pro1intro} has a sequence of solutions $\{u_{\lambda, n}\} \subset X_0^{s,p}(\Omega)$.  Moreover, there holds that
$$
\lim_{n \rightarrow \infty} \|u_{\lambda, n}\|_{X^{s,p}_0(\Omega)}=+\infty \quad \text{if} \,\, M=+\infty,
$$
$$
\lim_{n \rightarrow \infty} \|u_{\lambda, n}\|_{X^{s,p}_0(\Omega)}= \lim_{n \rightarrow \infty} \|u_{\lambda, n}\|_{L^\infty(\Omega)}=0 \quad \text{if} \,\, M=0^+.
$$
\end{thm}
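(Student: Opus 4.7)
The plan is to apply Ricceri's general variational principle to the energy decomposition $J_\lambda = \Phi - \lambda\Psi$ on $X := X_0^{s,p}(\Omega)$, where
\begin{align*}
\Phi(u) := \tfrac{1}{p}\|u\|_X^p, \qquad \Psi(u) := \int_\Omega \beta(x) G(u(x))\,dx.
\end{align*}
Critical points of $J_\lambda$ coincide with weak solutions of \eqref{pro1intro}. Theorem \ref{thm2} supplies the pointwise estimate $\|u\|_{L^\infty(\Omega)} \leq L\|u\|_X$ together with compact embeddings of $X$ into $C^{0,\beta}(\overline{\Omega}) \hookrightarrow L^\infty(\Omega)$; consequently $\Phi$ is convex, coercive, of class $C^1$ and sequentially weakly lower semi-continuous with $\inf_X\Phi = 0$, while $\Psi$ is of class $C^1$ and sequentially weakly continuous.

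Ricceri's principle then reduces the argument to controlling the quantity
$$\varphi(\rho) := \inf_{\Phi(u)<\rho}\frac{\sup_{\Phi(v)\leq\rho}\Psi(v) - \Psi(u)}{\rho-\Phi(u)},$$
examined as $\rho\to+\infty$ when $M=+\infty$ and as $\rho\to 0^+$ when $M=0^+$. The upper bound on the supremum uses the embedding: $\Phi(v)\leq\rho$ forces $\|v\|_{L^\infty}\leq L(p\rho)^{1/p}$, so $\sup_{\Phi(v)\leq\rho}\Psi(v) \leq \|\beta\|_{L^\infty(\Omega)}|\Omega|\max_{|\xi|\leq L(p\rho)^{1/p}} G(\xi)$. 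Dividing by $\rho$ and passing to liminf along a sequence $\rho_n$ with $L(p\rho_n)^{1/p}\to M$ isolates the factor $p\|\beta\|_{L^\infty(\Omega)}|\Omega|L^p A_M = 1/\lambda_{2,M}$. The second ingredient is an explicit test function: pick $x_0\in\Omega$ with $B(x_0, r)\subset\Omega$ (feasible by definition of $r$) and, for each $\tau>0$, construct $w_\tau\in X$ equal to $\tau$ on $B(x_0, r/2)$, vanishing outside $B(x_0, r)$, with an affine transition in the quasi-norm $|x_0^{-1}x|/r$. Bounding the Gagliardo integral
$$\iint_{\G\times\G}\frac{|w_\tau(x)-w_\tau(y)|^p}{|y^{-1}x|^{Q+sp}}\,dx\,dy$$
by splitting $\G\times\G$ into inner--inner, inner--annular, and outer--outer pairs with respect to $B(x_0,r/2)$ and $B(x_0,r)$, and invoking the left-invariance and $Q$-homogeneity of the quasi-distance together with $|B(x,\rho)|=\omega_Q\rho^Q/Q$, produces a bound whose constant is precisely $2^Q C_{p,Q,s}\omega_Q\tau^p/r^{sp}$. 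Paired with $\Psi(w_\tau)\geq \beta_0|B(x_0, r/2)|G(\tau)$ and with $\tau_n\to M$ chosen so that $G(\tau_n)/\tau_n^p\to B_M$, this yields $J_\lambda(w_{\tau_n})\to-\infty$ whenever $\lambda>\lambda_{1,M}$, ruling out a global minimum of $J_\lambda$ in the relevant regime; the hypothesis $\inf_{\tau\geq 0}G(\tau)=0$ enters to guarantee that $w_\tau$ sits above the floor of $\Psi$ so no negative baseline compensates the gain from $B_M$.

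Combining both estimates with the gap $A_M<DB_M$, every $\lambda\in(\lambda_{1,M},\lambda_{2,M})$ falls in the admissible range of Ricceri's alternative for infinitely many critical points. This furnishes, when $M=+\infty$, a sequence $\{u_{\lambda,n}\}\subset X$ with $\Phi(u_{\lambda,n})\to+\infty$, hence $\|u_{\lambda,n}\|_X\to+\infty$; and when $M=0^+$, a sequence of local minima of $J_\lambda$ converging strongly to zero in $X$, the $L^\infty$-convergence being an immediate corollary of the embedding $\|u\|_{L^\infty}\leq L\|u\|_X$. The main obstacle is the sharp Gagliardo seminorm computation for $w_\tau$: unlike in the Euclidean setting, the non-Euclidean geometry of $\G$ forces a careful annular decomposition around the quasi-ball $B(x_0, r)$, and the three separate integrals must align exactly to reproduce the constant $C_{p,Q,s}$ so that the resulting threshold coincides with $D$ and the interval $(\lambda_{1,M},\lambda_{2,M})$ is non-empty under the stated hypothesis.
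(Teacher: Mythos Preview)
Your proposal follows essentially the same strategy as the paper: apply the Bonanno--Bisci version of Ricceri's principle (Theorem~\ref{ricthm}) to $J_\lambda=\Phi-\lambda\Psi$, bound $\varphi(\rho)$ from above via the Morrey embedding $\|v\|_{L^\infty}\leq L\|v\|_X$ to obtain $\liminf\varphi\leq 1/\lambda_{2,M}$, and use the radially affine test function on a quasi-ball $B_r(x_0)\subset\Omega$ (the paper isolates its seminorm estimate as Lemma~\ref{lmest}) together with $\inf_{\tau\geq0}G(\tau)=0$ to defeat the competing alternative in Ricceri's theorem when $\lambda>\lambda_{1,M}$.

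One point needs correction: your sentence ``yields $J_\lambda(w_{\tau_n})\to-\infty$ whenever $\lambda>\lambda_{1,M}$'' is only right for $M=+\infty$. When $M=0^+$ you have $\tau_n\to0^+$, so $w_{\tau_n}\to0$ in $X$ and $J_\lambda(w_{\tau_n})\to0$; what the argument actually gives (and what the paper uses) is $J_\lambda(w_{\tau_n})<0=J_\lambda(0)$ along that sequence, which shows that the global minimizer $0$ of $\Phi$ is \emph{not} a local minimizer of $J_\lambda$, so alternative~(a) in part~(ii) of Theorem~\ref{ricthm} is excluded. With that adjustment your outline matches the paper's proof.
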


\begin{rem}
The condition $A_M<DB_M$ implies that the nonlinearity oscillates near the origin or at infinity. It guarantees that $\lambda_{1,M}<\lambda_{2, M}$.
\end{rem}

In our scenario, we cannot adapt the classical Lusternik-Schnirelmann theory to detect the existence of infinitely many solutions to \eqref{pro1intro}. As a result, to establish Theorem \ref{mainthmintrnon}, we will employ Theorem \ref{ricthm} proposed by Bonanno and Bisci \cite{BB, BB1}, which is a refinement of the well-known variational principle due to Ricceri \cite{Ric00}. Here the solutions are indeed local minima of the underlying energy functional in $\X$, which are not of mountain pass type. And we prove the multiplicity of solutions to \eqref{pro1intro} without imposing symmetry and oddness hypotheses on the nonlinearity.


\begin{thm} \label{color1.1}
Let $\G$ be a stratified Lie group of the homogeneous dimension $Q$ and $\Omega \subset \G$ be a bounded open subset. Let $0<s<1$, $p>1$ and $sp>Q$. 
Suppose that $g$ is nonnegative with $g(0)=0$ and
\begin{align} \label{condition}
\liminf_{\tau \rightarrow M} \frac{G(\tau)}{\tau^p}=0, \quad \limsup_{\tau \rightarrow M} \frac{G(\tau)}{\tau^p}=+\infty, \quad M \in \left\{0^+, +\infty \right\}.
\end{align}
Then, for every $\lambda>0$, \eqref{pro1intro} has a sequence of nonnegative solutions $\{u_{\lambda, n}\} \subset X_0^{s,p}(\Omega)$.  Moreover, there holds that
$$
\lim_{n \rightarrow \infty} \|u_{\lambda, n}\|_{X^{s,p}_0(\Omega)}=+\infty \quad \text{if} \,\, M=+\infty,
$$
$$
\lim_{n \rightarrow \infty} \|u_{\lambda, n}\|_{X^{s,p}_0(\Omega)}= \lim_{n \rightarrow \infty} \|u_{\lambda, n}\|_{L^\infty(\Omega)}=0 \quad \text{if} \,\, M=0^+.
$$
%
%
\end{thm}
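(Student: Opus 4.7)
The plan is to deduce Theorem \ref{color1.1} from Theorem \ref{mainthmintrnon} by applying the latter to a one-sided truncation of $g$, and then verifying that the resulting solutions are automatically nonnegative.

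First, I would introduce the truncated nonlinearity $\tilde{g}(\tau) := g(\max\{\tau, 0\})$. Since $g \in C(\R, \R)$ is nonnegative with $g(0) = 0$, $\tilde{g}$ is continuous, nonnegative on $\R$, and identically zero on $(-\infty, 0]$. Its primitive $\tilde{G}(\tau) := \int_0^\tau \tilde{g}(\xi)\,d\xi$ vanishes on $(-\infty, 0]$ and coincides with $G$ on $[0, +\infty)$, where it is nondecreasing. In particular, $\inf_{\tau \geq 0} \tilde{G}(\tau) = \tilde{G}(0) = 0$, and for every $\tau > 0$,
$$
\max_{|\xi| \leq \tau} \tilde{G}(\xi) = \tilde{G}(\tau) = G(\tau).
$$

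Next, I would compute the quantities $A_M$ and $B_M$ associated with $\tilde{G}$. Combining the identity above with hypothesis \eqref{condition} yields
$$
\tilde{A}_M = \liminf_{\tau \to M} \frac{G(\tau)}{\tau^p} = 0, \qquad \tilde{B}_M = \limsup_{\tau \to M} \frac{G(\tau)}{\tau^p} = +\infty.
$$
Consequently, the thresholds $\lambda_{1,M}$ and $\lambda_{2,M}$ of Theorem \ref{mainthmintrnon} degenerate to $0$ and $+\infty$ respectively, the strict inequality $\tilde{A}_M < D \tilde{B}_M$ is trivially satisfied, and Theorem \ref{mainthmintrnon} applies for every $\lambda > 0$. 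This produces a sequence $\{u_{\lambda, n}\} \subset X_0^{s,p}(\Omega)$ of solutions to the problem with nonlinearity $\tilde{g}$ exhibiting the announced norm asymptotics.

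Finally, I would prove that each $u_{\lambda, n}$ is nonnegative, so that $\tilde{g}(u_{\lambda, n}) = g(u_{\lambda, n})$ and $u_{\lambda, n}$ solves \eqref{pro1intro}. Since $X_0^{s,p}(\Omega)$ is stable under the operation $u \mapsto u^-$, I would test the weak formulation against $u_{\lambda, n}^-$. On the one hand, $\tilde{g}(u_{\lambda, n})\, u_{\lambda, n}^- \equiv 0$, because $\tilde{g}$ vanishes wherever $u_{\lambda, n} \leq 0$. On the other hand, the elementary pointwise inequality
$$
|a - b|^{p-2}(a - b)(a^- - b^-) \leq -|a^- - b^-|^p, \qquad a, b \in \R,
$$
applied with $a = u_{\lambda, n}(x)$ and $b = u_{\lambda, n}(y)$ and integrated against the kernel $|y^{-1}x|^{-(Q+ps)}$, produces $0 \leq -\|u_{\lambda, n}^-\|_{X_0^{s,p}(\Omega)}^p$. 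Hence $u_{\lambda, n}^- \equiv 0$, and the conclusion follows. The main obstacle is really the recording of this pointwise convexity fact in the group setting; since it is purely one-variable and independent of the kernel, the test-function computation is identical to the Euclidean case, and the substantive content of the proof is the reduction to Theorem \ref{mainthmintrnon} via the truncation $\tilde{g}$.
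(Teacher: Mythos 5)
Your proposal is correct and follows essentially the same route as the paper. The truncation $\tilde g(\tau)=g(\max\{\tau,0\})$ coincides with the paper's $g^+$, the reduction of $A_M$ and $B_M$ to $0$ and $+\infty$ uses the same observation that $\max_{|\xi|\le\tau}\tilde G(\xi)=G(\tau)$ for $\tau\ge 0$, and the nonnegativity argument tests against $u^-$ (the paper tests against $-u^-$, an immaterial sign difference) and invokes the same pointwise convexity inequality.
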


It follows from the condition \eqref{condition} that $A_M=0$ and $B_M=+\infty$. Then the existence and the asymptotic behaviors of the solutions are beneficial from Theorem \ref{mainthmintrnon}. To check that the solutions are nonnegative, we need to introduce a modified problem by truncating the nonlinearity. 

The paper is organized as follows. In Section \ref{pre}, we present some preliminary results regarding basic notations and concepts connected with stratified Lie groups and also introduce fractional Sobolev spaces in the framework of stratified Lie groups. In Section \ref{embedding}, we examine the embedding theorems and show the proofs of Theorem \ref{thm1} and \ref{thm2}. Finally, Section \ref{application} is devoted to applications of the embedding theorems and contains the proofs of Theorems \ref{mainthmintrnon} and \ref{color1.1}.


%
 %

\section{Preliminaries: Stratified Lie groups and Fractional Sobolev spaces} \label{pre}

This section is devoted to recapitulating some basic notations and concepts related to stratified Lie groups and introducing a class of fractional Sobolev spaces in the framework of stratified Lie groups, which play important roles in the proof of our main findings. We refer to various books, monographs, and papers such as \cite{FS, BLU07, FR, RS1, GKR, GKR24} for the material presented in this section. 

\subsection{Stratified Lie groups}  There are multiple approaches to introducing the concept of stratified Lie groups, we refer to \cite{FS, BLU07} and references therein. We begin with the definition of homogeneous Lie groups due to Folland and Stein \cite{FS}.

\begin{defi}
A Lie group $\G$ $(\mbox{on} \,\, \R^N)$ is said to be homogeneous if, for any $\lambda>0$, there exists an automorphism $D_{\lambda} : \G \to \G$ defined by
$$
D_{\lambda}(x):=(\lambda^{r_1}x_1, \lambda^{r_2} x_2, \cdots, \lambda^{r_N} x_N), \quad r_1, r_2, \cdots r_N>0.
$$
The map $D_{\lambda}$ is called a dilation on $\G$.
\end{defi}

For simplicity, we will use the notation $\lambda x$ to denote the dilation $D_{\lambda} x$. Note that if $\lambda x$ is a dilation, then $\lambda^r x$ is also a dilation. The number $Q=r_1+r_2+ \cdots+r_N$ is called the homogeneous dimension of the homogeneous Lie group $\G$ and the natural number $N$ represents the topological dimension of $\G$. The Haar measure on $G$ is denoted by $dx$ and it is nothing but the usual Lebesgue measure on $\R^N$.

\begin{defi} \label{d2}
A homogeneous Lie group $\G=(\R^N, \circ)$ is called a stratified Lie group (or a homogeneous Carnot group) if the following two conditions are fulfilled,
\begin{itemize}
\item [$(\textnormal{i})$] For some natural numbers $N_1+N_2+ \cdots +N_k=N$, the decomposition $\R^N=\R^{N_1} \times \R^{N_2} \times \cdots \times \R^{N_k}$ holds and for any $\lambda>0$, there exists a dilation of the form
$$
D_{\lambda}(x)=\left(\lambda x^{(1)}, \lambda^{2} x^{(2)}, \cdots, \lambda^k x^{(k)}\right), \quad x^{(i)} \in \R^{N_i}, i=1,2,\cdots, k,
$$
which is an automorphism of the group $\G$.
\item[$(\textnormal{ii})$] For $N_1$ the same as in the above decomposition of $\R^N$, let $X_1, X_2, \cdots, X_{N_1}$ be the left-invariant vector fields on $\G$ such that
$$
X_k(0)=\frac{\partial}{\partial x_k} \mid_0, \quad k=1, \cdots, N_1.
$$
Then the H\"ormander condition 
$$
rank \left( Lie \left\{X_1, \cdots, X_{N_1}\right\}\right)=N
$$
holds for any $x \in \R^N$. In other words, the Lie algebra corresponding to the Lie group $\G$ is spanned by the iterated commutators of $X_1, \cdots, X_{N_1}$.
\end{itemize}
\end{defi}

Here $k$ is called the step of the homogeneous Carnot group. Note that, in the case of stratified Lie groups, the homogeneous dimension becomes $Q=N_1+2N_2+ \cdots kN_k$. Furthermore, the left-invariant vector fields $X_j$ satisfy the divergence theorem and they can be written explicitly as
$$
X_i=\frac{\partial}{\partial x^{(1)}_i} + \sum_{j=2}^k \sum_{l=1}^{N_1} a_{i, l}^{(j)}(x^1, x^2, \cdots, x^{j-1}) \frac{\partial}{\partial x_l^{(j)}},
$$
where $a_{i, l}^{(j)}$ is a  homogeneous polynomal of weighted degree $j-1.$
For simplicity, we will  set $n=N_1$ in the above Definition \ref{d2}.

We would like to point out that in the literature, a stratified Lie group (or Carnot group) $\mathbb{G}$ is also defined as a connected, simply connected Lie group whose Lie algebra $\mathfrak{g}$ is stratifiable. This means that $\mathfrak{g}$ can be decomposed as a direct sum of vector spaces, $\mathfrak{g} = \bigoplus_{i=1}^k \mathfrak{g}_i$, where $[\mathfrak{g}_1, \mathfrak{g}i] = \mathfrak{g}{i+1}$ for all $i=1,2,\ldots, k-1$, and $[\mathfrak{g}_1, \mathfrak{g}_k] = {0}.$  The first layer $\mathfrak{g}_1\equiv \mathbb{R}^n$ is called the horizontal layer and it plays an important role in the theory.

It is easy to verify that any homogeneous Carnot group is a Carnot group under the classical definition (see \cite[Theorem 2.2.17]{BLU07}). The reverse implication is also true, as demonstrated in \cite[Theorem 2.2.18]{BLU07}. 

The sub-bundle of the tangent bundle $T\G,$ known as Horizontal bundle spanned by the vector fields 
$X:=\{X_1, X_2, \ldots, X_n\}$ plays a pivotal role in the theory. The fibers of $H\G$ are 
$$H\G_x:=\text{span}\{X_1(x), \ldots, X_n(x)\},\quad x \in \G. $$
It is possible to endowed each fiber of $H\G$ with a scalar product $\langle\cdot, \cdot \rangle_X$ and with a norm $|\cdot|_X$ that makes $X_1(x), \ldots, X_n(x)$ an orthonormal basis. 

Fixing the basis $X_1(x), \ldots, X_n(x)$ for the horizontal layer, we define horizontal gradient $\nabla_{\G}f$ for any function $f :\G \rightarrow \mathbb{R}$ for which the $X_jf$ exist, by 
$$\nabla_{\G}f:= \sum_{i=1}^n (X_i f)X_i,$$ whose coordinates are $(X_1f, \ldots, X_nf).$

Our operative definition of a stratified Lie group is advantageous because it is not only more practical for analytical applications, but it also provides clearer insights into the group law. Examples of stratified Lie groups include the Heisenberg group, more generally, $H$-type groups, the Engel group, and the Cartan group.

An absolutely continuous curve $\gamma: [0, 1] \to \R$ is said to be admissible, if there exist functions $c_i: [0, 1] \to \R$ for $i=1,2, \cdots, n$ such that
$$
\dot{\gamma}(t)=\sum_{i=1}^{n} c_i(t)X_i(\gamma(t)), \quad \sum_{i=1}^n c_i{t}^2 \leq 1.
$$
Observe that the functions $c_i$ may not be unique as the vector fields $X_i$ may not be linearly independent. For any $x, y \in \G$, the Carnot-Carathe\'eodory distance is defined by
$$
\rho_{cc}(x, y):=\inf \left\{ l>0: \mbox{there exists an admissible} \,\, \gamma : [0, 1] \to \G, \gamma(0)=x, \gamma(l)=y\right\}.
$$
We define that $\rho_{cc}(x, y)=0$ if no such curve exists. This $\rho_{cc}$ is not a metric in general but the H\"ormander condition for the vector fields $X_1, X_2, \cdots, X_{N_1}$ ensures that $\rho_{cc}$ is a metric. This space $(\G, \rho_{cc})$ is known as a Carnot-Carath\'eodory space $\G$.

\begin{defi}
A continuous function $|\cdot|: \G \to \R_{\geq 0}$ is said to be a homogeneous quasi-norm on a homogeneous Lie group $\G$ if it satisfies the following conditions,
\begin{itemize}
\item [$(\textnormal{i})$] (Definiteness) $|x|=0$ if and only if $x=0$.
\item [$(\textnormal{ii})$] (Symmetric) $|x^{-1}|=|x|$ for any $x \in \G$.
\item [$(\textnormal{iii})$] (1-homogeneous) $|\lambda x|=\lambda|x|$ for any $x \in \G$ and $\lambda>0$.
\end{itemize}
\end{defi}

An example of a quasi-norm on $\G$ is the norm defined by $d(x):=\rho_{cc}(x, 0)$ for $x \in \G$, where $\rho_{cc}$ is a Carnot-Carath\'eodory distance related to H\"ormander vector fields on $\G$. It is known that all homogeneous quasi-norms are equivalent on $\G$. In this paper, we will work with a left-invariant homogeneous distance
$$
d(x, y):=|y^{-1} \circ x|, \quad x, y \in \G
$$
induced by the homogeneous quasi-norm of $\G$.

Let $\Omega$ be a Haar measurable subset of $\G$. Then $\mu(D_{\lambda}(\Omega))=\lambda^{Q}\mu(\Omega)$, where $\mu(\Omega)$ is the Harr measure of $\Omega$. The quasi-ball of radius $r$ centered at $x \in \G$ with respect to the quasi-norm is defined by
$$
B_r(x):= \left\{ y \in \G : |y^{-1} \circ x| <r \right\}.
$$
Observe that $B_r(x)$ can be obtained by the left-translation by $x$ of the ball $B(0, r)$. Furthermore, $B(0, r)$ is the image under the dilation of $D_r$ of $B_1(0)$. Then we have that $\mu(B_r(x))=r^{Q}$ for any $x \in \G$. Now we recall the following result about the polar decomposition on homoegenoues Lie groups. 
\begin{lem} \label{polarc} \cite[Proposition 3.1.42]{FR}
Let $\G$ be a homogeneous Lie group with the homogeneous quasi-norm $| \cdot|$. Then there exists a  positive Borel measure $\sigma$ on the unit sphere
$$
\mathcal{S}:=\left\{ x \in \G : |x|=1\right\}
$$
such that, for any $f \in L^1(\G)$, there holds that
$$
\int_{\G} f(x) \,dx=\int_0^{\infty} \int_{\mathcal{S}} f(rx) r^{Q-1} d\sigma dr.
$$
\end{lem}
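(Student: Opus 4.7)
The plan is to exploit the one-to-one correspondence between $\G\setminus\{0\}$ and $(0,\infty)\times\mathcal{S}$ induced by the dilations, together with the scaling law $\mu(D_\lambda A)=\lambda^Q\mu(A)$ for the Haar measure, to construct the measure $\sigma$ on $\mathcal{S}$ in a canonical way. First I would define
$$
\Phi\colon (0,\infty)\times\mathcal{S} \longrightarrow \G\setminus\{0\}, \qquad \Phi(r,\omega):=D_r(\omega),
$$
and verify that it is a bijection with continuous inverse $x\mapsto\bigl(|x|,D_{1/|x|}(x)\bigr)$; this uses only the continuity and $1$-homogeneity of the quasi-norm together with the fact that each $D_\lambda$ is a homeomorphism of $\G$. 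In particular, $\Phi$ is a Borel isomorphism.

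Next, for a Borel set $E\subset\mathcal{S}$ and $t>0$, set
$$
\Sigma_t(E):=\Phi\bigl((0,t]\times E\bigr)=\{D_r(\omega):0<r\leq t,\ \omega\in E\}.
$$
Because $\Sigma_t(E)=D_t(\Sigma_1(E))$, the dilation scaling yields $\mu(\Sigma_t(E))=t^Q\mu(\Sigma_1(E))$. I would then \emph{define}
$$
\sigma(E):=Q\,\mu(\Sigma_1(E)),
$$
which is clearly a positive Borel measure on $\mathcal{S}$, countable additivity being inherited from $\mu$.

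The third step is to verify the desired identity by testing it on the indicator functions $f=\chi_{\Sigma_t(E)}$. The left-hand side equals $\mu(\Sigma_t(E))=t^Q\sigma(E)/Q$, whereas the right-hand side equals
$$
\int_0^\infty\int_{\mathcal{S}}\chi_{(0,t]}(r)\,\chi_E(\omega)\,r^{Q-1}\,d\sigma(\omega)\,dr \;=\; \sigma(E)\int_0^t r^{Q-1}\,dr \;=\;\frac{t^Q}{Q}\sigma(E),
$$
so the two sides agree. Finite linear combinations extend the identity to characteristic functions of disjoint unions of "spherical shells" $\Sigma_{t_2}(E)\setminus\Sigma_{t_1}(E)$; since this collection is closed under finite intersections and generates the Borel $\sigma$-algebra of $\G\setminus\{0\}$ (via the Borel isomorphism $\Phi$), a monotone class / $\pi$-$\lambda$ argument lifts the identity to all non-negative Borel functions, and splitting $f=f_+-f_-$ finishes the extension to $L^1(\G)$.

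The main obstacle is justifying that $\Phi$ is a genuine Borel isomorphism, so that $\sigma$ is well-defined on the Borel $\sigma$-algebra of $\mathcal{S}$ and the generating class above does span the relevant $\sigma$-algebra. This in turn rests on the continuity and strict positivity of the homogeneous quasi-norm on $\G\setminus\{0\}$, which follows directly from the defining properties of $|\cdot|$. Everything else amounts to a careful bookkeeping exercise combining Fubini's theorem with the dilation invariance of $\mu$.
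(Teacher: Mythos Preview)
Your proof sketch is correct and follows the standard construction of the polar decomposition on homogeneous Lie groups: define the polar map $\Phi(r,\omega)=D_r\omega$, set $\sigma(E)=Q\,\mu(\Sigma_1(E))$, verify the formula on sectors $\Sigma_t(E)$ using the scaling $\mu(D_t A)=t^Q\mu(A)$, and pass to general $f\in L^1$ by a monotone class argument. There is nothing to compare against, however: in the paper this lemma is merely \emph{quoted} from \cite[Proposition~3.1.42]{FR} and carries no proof of its own. Your write-up is essentially the argument one finds in the cited reference (or in Folland--Stein), so it is entirely appropriate; just be aware that the authors treat this as background material rather than something to be re-proved.
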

\subsection{Fractional Sobolev spaces on stratified Lie groups}
We are now in a position to introduce the notion of fractional Sobolev spaces in the framework of stratified Lie groups. Let $\Omega \subset \G$ be an open subset. The fractional Sobolev space $W^{s,p}(\Omega)$ is defined by 
$$
W^{s,p}(\Omega):=\left\{u \in L^p(\Omega) : [u]_{s, p, \Omega} < +\infty\right\}
$$
endowed with the norm
$$
\|u\|_{W^{s,p}(\Omega)}:=\|u\|_{L^p(\Omega)} + [u]_{s,p, \Omega},
$$
where $[u]_{s,p, \Omega}$ denotes the Gagliardo semi-norm defined by
\begin{align} \label{norm}
[u]_{s,p, \Omega}:=\left(\int_{\Omega} \int_{\Omega} \frac{|u(x)-u(y)|^p}{|y^{-1}x|^{Q+ps}}\,dxdy\right)^{\frac 1p}.
\end{align}
The fractional Sobolev space $W^{s,p}_0(\Omega)$ is defined by the completion of of $C_0^{\infty}(\Omega)$ under the norm $\|u\|_{W^{s,p}(\Omega)}$. It is clear to see that $W^{s,p}_0(\G)=W^{s,p}(\G)$.

Let $\Omega \subset \G$ be a bounded domain. We define the space $\X$ be the completion of $C_0^{\infty}(\Omega)$ under the norm
$$
\|u\|_{L^p(\Omega)}+[u]_{s,p, \G}.
$$
It follows from \cite[Lemma 1]{GKR} that $\X$ is a reflexive Banach space. In view of the following Poincar\'e's inequality proved in \cite{GKR},
\begin{align} \label{pi}
\|u\|_{L^p(\Omega)} \leq C [u]_{p,s,\G},
\end{align}
then $\X$ can be regarded as the completion of $C_0^{\infty}(\Omega)$ under the norm $[u]_{p,s,\G}$.
Therefore, there holds that
\begin{align} \label{defx}
X_0^{s,p}(\Omega)=\left\{u \in W^{s, p}(\G) : u=0 \,\, \mbox{in} \,\, \G \backslash \Omega \right\}
\end{align}
Next, we give definition of an extension domain in the setting of stratified Lie groups, which plays an important role in the study of fractional Sobolev spaces.
\begin{defi} \label{defextension}
Let $\Omega \subset \G$ be a bounded domain. We say that $\Omega$ is an extension domain of $W^{s,p}_0(\Omega)$ if for any $f \in W^{s,p}_0(\Omega)$ there exists $\widetilde{f} \in W^{s,p}_0(\G)$ such that $\widetilde{f}\mid_{\Omega}=f$ and
$$
\|\widetilde{f}\|_{W^{s,p}_0(\G)} \leq C_{Q,s,p,\Omega} \|f\|_{W^{s,p}_0(\Omega)},
$$
where $C_{Q,s,p,\Omega}>0$ is a constant depending on $Q,s,p$ and $\Omega$.
\end{defi}

Now, we define the Folland-Stein classes $C^{0, \alpha}(\Omega)$ for $0<\alpha \leq 1$ and an open set $\Omega \subset \G,$ by 
\begin{align}
    C^{0, \alpha}(\Omega):= \left\{ u \in L^\infty(\Omega): \underset{x, y \in \Omega, x \neq y}{\sup} \frac{|u(x)-u(y)|}{|y^{-1}x|^\alpha}< \infty \right\}
\end{align}
which equipped with the norm 
\begin{align}
    \|u\|_{C^{0, \alpha}(\Omega)}:= \|u\|_{L^\infty(\Omega)}+[u]_{C^{0, \alpha}(\Omega)},
\end{align}

where the Folland-Stein seminorm is defined as 
\begin{align}
    [u]_{C^{0, \alpha}(\Omega)}:= \underset{x, y \in \Omega, x \neq y}{\sup} \frac{|u(x)-u(y)|}{|y^{-1}x|^\alpha}.
\end{align}
Also, we define $C^{0, \alpha}(\overline{\Omega}):=   C^{0, \alpha}(\Omega) \cap C(\overline{\Omega}).$ 
Throughout the paper, we will use the notation $X \lesssim Y$ to indicate the estimate $X \leq CY$ for certain constant $C>0.$

\section{Fractional Morrey-Sobolev type embeddings on stratified Lie groups} \label{embedding}

In this section, we will  present  the proof of   Fractional Morrey-Sobolev type embeddings, that is, Theorem \ref{thm1} and Theorem \ref{thm2}. We begin with presenting the following technical result which will be useful to prove Theorem \ref{thm1} and Theorem \ref{thm2}.

\begin{lem} \label{fem} Let $\G$ be a stratified Lie group of homogeneous dimension $Q.$
Let $0<s<1$ and $1 \leq p<+ \infty$. Then, for any $u \in W^{s, p}(\G)$, there holds that
$$
\sup_{|z|>0} \int_{\G} \frac{|u(xz)-u(x)|^p}{|z|^{ps}} \,dx \leq C [u]^p_{W^{s, p}(\G)},
$$
where $C:=C(Q, p, s)>0$.
\end{lem}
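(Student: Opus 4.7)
The plan is to reduce the pointwise quantity $|u(xz)-u(x)|^p$ to two translation differences both tied to the Gagliardo seminorm via an averaging (``doubling'') argument over a small quasi-ball, exploiting the bi-invariance of Haar measure on $\G$.

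First I would introduce the auxiliary functional
\[
F(z):=\int_{\G}|u(xz)-u(x)|^{p}\,dx,
\]
and observe via the change of variables $z_{0}=x_{2}^{-1}x_{1}$ (using left-invariance of Haar measure) that the Gagliardo seminorm can be recast as
\[
[u]^{p}_{W^{s,p}(\G)}=\int_{\G}\frac{F(z_{0})}{|z_{0}|^{Q+ps}}\,dz_{0}.
\]
The goal then becomes showing $F(z)\lesssim |z|^{ps}\,[u]^{p}_{W^{s,p}(\G)}$ uniformly in $z$.

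Fix $z\in\G$ with $|z|=r>0$ and any $w\in B_{r}(e)$. Writing $u(xz)-u(x)=\bigl(u(xz)-u(xw)\bigr)+\bigl(u(xw)-u(x)\bigr)$ and using the convexity inequality $|a+b|^{p}\le 2^{p-1}(|a|^{p}+|b|^{p})$, I would integrate in $x$ and apply the change of variables $y=xw$ (bi-invariance of $dx$ on the stratified group) to get
\[
F(z)\le 2^{p-1}\bigl(F(w^{-1}z)+F(w)\bigr),\qquad w\in B_{r}(e).
\]
Averaging this pointwise bound over $w\in B_{r}(e)$ and using $|B_{r}(e)|=r^{Q}|B_{1}(e)|$ yields
\[
F(z)\le\frac{2^{p-1}}{r^{Q}|B_{1}(e)|}\left(\int_{B_{r}(e)}F(w)\,dw+\int_{B_{r}(e)}F(w^{-1}z)\,dw\right).
\]

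For the first average, since $|w|\le r$ implies $|w|^{Q+ps}\le r^{Q+ps}$, we control
\[
\int_{B_{r}(e)}F(w)\,dw\le r^{Q+ps}\int_{B_{r}(e)}\frac{F(w)}{|w|^{Q+ps}}\,dw\le r^{Q+ps}[u]^{p}_{W^{s,p}(\G)}.
\]
For the second, I would change variables $v=w^{-1}z$ (so $w=zv^{-1}$; the map has unit Jacobian by bi-invariance and inversion-invariance of the Haar measure) and use the quasi-triangle inequality $|v|\le K(|w^{-1}|+|z|)\le 2Kr$ of the homogeneous quasi-norm to conclude $v\in B_{2Kr}(e)$, and then run the same estimate as above on the enlarged ball:
\[
\int_{B_{r}(e)}F(w^{-1}z)\,dw\le\int_{B_{2Kr}(e)}F(v)\,dv\le (2K)^{Q+ps}r^{Q+ps}[u]^{p}_{W^{s,p}(\G)}.
\]
Combining the two bounds, the $r^{Q}$ factors cancel appropriately and I obtain $F(z)\le C(Q,p,s)\,r^{ps}\,[u]^{p}_{W^{s,p}(\G)}$ as claimed, after taking the supremum in $z$.

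The main obstacle, compared with the Euclidean analogue, is bookkeeping with the non-commutative group law and the homogeneous quasi-norm (which satisfies only a quasi-triangle inequality with a constant $K\ge 1$). Care must be taken in the change of variables $v=w^{-1}z$: I would invoke the bi-invariance of Haar measure on stratified groups and the fact that $dx$ is invariant under $x\mapsto x^{-1}$, so that $\int g(w^{-1}z)\,dw=\int g(v)\,dv$ with the correct support transformation, and then verify the inclusion $B_{r}(e)\subset\{w:w^{-1}z\in B_{2Kr}(e)\}$ via the quasi-triangle inequality. All remaining steps are straightforward and the constant $C$ depends only on $Q,p,s,K$ (and $K$ is intrinsic to $\G$).
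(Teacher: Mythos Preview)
Your proof is correct and is genuinely different from the paper's argument. The paper proceeds by a mollification technique: it introduces a bump function $\chi$ supported in an annulus with $\int_\G \nabla_\G\chi=0$, writes $u(xz)-u(x)=\int (u(xz)-u(x))\chi_\eps(y)\,dy$, and then splits into three pieces, one of which requires estimating $\chi_\eps(y^{-1}(xz))-\chi_\eps(y^{-1}x)$ via the fundamental theorem of calculus along the curve $s\mapsto x(sz)$ and the Pansu derivative, producing a horizontal-gradient factor $|z'|\lesssim |z|$; only after this differentiability step does comparison with the Gagliardo integral over an annular shell emerge. Your route bypasses all of this: the triangle decomposition $u(xz)-u(x)=(u(xz)-u(xw))+(u(xw)-u(x))$, convexity, and averaging in $w$ over $B_{|z|}(e)$ reduce matters to $\int_{B_{cr}}F(w)\,dw\le (cr)^{Q+ps}[u]^p$, which is immediate. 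The only group-theoretic inputs you need are unimodularity (hence bi-invariance and inversion-invariance of Haar measure, both valid on nilpotent groups) and the quasi-triangle inequality for the homogeneous norm; no Pansu differentiability is used. Your argument is shorter, more elementary, and yields a transparent constant depending only on $Q,p,s$ and the quasi-norm constant $K$. The paper's approach, being closer to smoothing/Littlewood--Paley ideas, may interface more naturally with other techniques in the literature (e.g.\ heat-semigroup characterizations), but for this lemma your method is clearly preferable.
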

\begin{proof}
Let $\chi \in C^{\infty}_0(\G)$ be a nonnegative function with the support in $B_1(0) \backslash B_{1/2}(0)$ and satisfying 
\begin{align} \label{zero}
\int_{\G} \nabla_{\G} \chi \,dx=0.
\end{align}
Without restriction, we may assume that $\|\chi\|_{L^{1}(\G)}=1$. Otherwise, we will  use the function $\chi/\|\chi\|_{L^{1}(\G)}$ instead of $\chi$. Let $z=(z', z'') \in \G \backslash \{0\}$ and $0<\eps<|z|$, where $z'$ is a horizontal component of $z$. 
Define
$$
\chi_{\eps}(\cdot)=\frac{1}{\eps^Q} \chi\left(\frac{\cdot}{\eps}\right).
$$
It is clear to see that $\|\chi_{\eps}\|_{L^1(\G)}=1$. By making changes of variables and using the fact that the Haar measure $dy$ is unimodular and translation invariant, we then obtain that
\begin{align} \label{em1}
\begin{split}
|u(xz)-u(x)|&= \left| \int_{\G} (u(xz)-u(x)) \chi_{\eps}(y) dy \right| \\&=\left|\int_{\G}u(y) \chi_{\eps}(y^{-1}(xz)) \,dy + \int_{\G} \left(u(xz)-u((xz)y^{-1})\right) \chi_{\eps}(y) \,dy \right.\\
& \quad \,\, - \left.  \int_{\G} u(y) \chi_{\eps}( y^{-1}x) \, dy - \int_{\G} \left(u(x)-u(xy^{-1} )\right) \chi_{\eps}(y)\,dy \right|  \\
& \leq \left|\int_{\G}u(y) \left(\chi_{\eps}(y^{-1}(xz))-\chi_{\eps}(y^{-1} x) \right)\,dy\right| \\&\quad\quad\quad + \int_{\G} \left|u(xz)-u((xz)y^{-1} )\right|\chi_{\eps}(y) \,dy \\
& \quad\quad\quad\quad + \int_{\G} \left|u(x)-u( xy^{-1})\right| \chi_{\eps}(y)\,dy. 
\end{split} \end{align}
We are going to estimate the first term on the right-hand side of \eqref{em1}. Thanks to \eqref{zero}, we compute that
\begin{align} \label{id1}
\int_{\G} \nabla_{G} \chi_{\eps}(x) \,dx=\frac{1}{\eps^{Q+1}} \int_{\G} \nabla_{\G} \chi_{\eps}\left(\frac{x}{\eps}\right) \,dx=0.
\end{align}
Utilizing \eqref{id1} and proceeding a similar calculation as in the proof of \cite[Lemma 3.1]{GT24} together with the Pansu differentiability \cite{Pansu} when target space is $(\mathbb{R}, +)$  , we get  the following estimate
\begin{align} \label{es1'} \nonumber
\Big| \int_{\G}u(y) &\left(\chi_{\eps}(y^{-1}(xz))-\chi_{\eps}(y^{-1} x) \right)\,dy \Big|=\Big|\int_{\G} u(y) \int_0^1 \frac{d}{ds} \chi_{\eps}(y^{-1} x(sz)) \,dsdy \Big|\\ \nonumber
&=\Big|\int_0^1 \int_{\G} u(y)\langle \nabla_{\G}\chi_{\eps}((y^{-1} x)sz), z' \rangle \,dsdy\Big|\\
& =\Big|\int_0^1 \int_{\G} \left(u(y)-u(x(sz))\right) \langle \nabla_{\G}\chi_{\eps}((y^{-1} x)sz), z' \rangle\,dsdy\Big| \\\nonumber
& \leq \frac{\|\nabla_{\G} \chi\|_{L^{\infty}(\G)} |z'|}{\eps^{Q+1}}\int_0^1 \int_{B_{\eps}(x(sz)) \backslash B_{\eps/2}(x(sz))} |u(y)-u(x(sz))| \,dsdy,
\end{align}
where $|z'|$ is the Euclidean norm of $z' \in \mathfrak{g}_1=\mathbb{R}^n.$ 
Note that there always exists (due to continuity and homogeneity) a constant $M:= \max_{|z|=1} |z'|>0$ such that  $|z'| \leq M |z|$ for any quasi-norm $|\cdot|$ on $\G.$
Thus, we deduce from \eqref{es1'} that 
\begin{align*} 
\bigg| &\int_{\G}u(y) \left(\chi_{\eps}(y^{-1}(xz))-\chi_{\eps}(y^{-1} x) \right)\,dy \bigg|  \\
&\leq \frac{\|\nabla_{\G} \chi\|_{L^{\infty}(\G)} M |z|}{\eps^{Q+1}}\int_0^1 \int_{B_{\eps}(0) \backslash B_{\eps/2}(0)} |u(x(sz)y)-u(x(sz))| \,dsdy.
\end{align*}
Applying Jensen's inequality, we then derive that
\begin{align*} 
\begin{split}
\bigg | & \int_{\G}u(y)\left(\chi_{\eps}(y^{-1}(xz))-\chi_{\eps}(y^{-1} x) \right)\,dy \bigg |^p \\ &\leq \frac{\|\nabla_{\G} \chi\|^p_{L^{\infty}(\G)} M^p |z|^p}{\eps^{p}} \left(\frac{1}{\eps^Q}\int_0^1 \int_{B_{\eps}(0) \backslash B_{\eps/2}(0)} |u(x(sz)y)-u(x(sz))| \,dsdy\right)^p \\
&\leq \frac{\|\nabla_{\G} \chi\|^p_{L^{\infty}(\G)} M^p |z|^p}{\eps^{Q+p}}\int_0^1 \int_{B_{\eps}(0) \backslash B_{\eps/2}(0)} |u(x(sz)y)-u(x(sz))|^p \,dsdy.
\end{split}
\end{align*}
Similarly, invoking the definition of $\chi_{\eps}$ and Jensen's inequality, we are able to show that
\begin{align}
\left(\int_{\G} \left|u(xz)-u((xz)y^{-1} )\right|\chi_{\eps}(y) \,dy\right)^p &\leq\frac{\|\chi\|^p_{L^{\infty}(\G)}}{\eps^{Q}}\int_{B_{\eps}(0) \backslash B_{\eps/2}(0)}  \left|u(xz)-u((xz)y^{-1} )\right|^p \,dy,\\
\left(\int_{\G}  \left|u(x)-u(xy^{-1})\right| \chi_{\eps}(y)\,dy \right)^p &\leq\frac{\|\chi\|^p_{L^{\infty}(\G)}}{\eps^{Q}}\int_{B_{\eps}(0) \backslash B_{\eps/2}(0)}  \left|u(x)-u(xy^{-1})\right| ^p \,dy.
\end{align}
Therefore, taking into account \eqref{em1} and the change of variables along with the translation invariance of the Haar measure, we obtain that
\begin{align*}
\int_{\G}|u(xz)-u(x)|^p \,dx & \lesssim \frac{\|\nabla_{\G} \chi\|^p_{L^{\infty}(\G)} |z|^p}{\eps^{Q+p}}\int_{B_{\eps}(0) \backslash B_{\eps/2}(0)}\int_{\G} |u(xy)-u(x)|^p \,dxdy \\
& \quad +\frac{\|\chi\|^p_{L^{\infty}(\G)}}{\eps^{Q}}\int_{B_{\eps}(0) \backslash B_{\eps/2}(0)} \int_{\G} |u(xy)-u(x)|^p \,dxdy.
\end{align*}
Since $0<\eps<|z|$, we then get that 
\begin{align*}
\int_{\G}&|u(xz)-u(x)|^p \,dx \\&\lesssim \frac{\left(\|\nabla_{\G} \chi\|^p_{L^{\infty}(\G)} + \|\chi\|^p_{L^{\infty}(\G)}\right)|z|^p}{\eps^{Q+p}}\int_{B_{\eps}(0) \backslash B_{\eps/2}(0)}\int_{\G} |u(xy)-u(x)|^p \,dxdy.
\end{align*}
Then it  follows that
\begin{align*}
\int_{\G}&\frac{|u(xz)-u(x)|^p}{|z|^{ps}} \,dx \\&\lesssim \frac{\left(\|\nabla_{\G} \chi\|^p_{L^{\infty}(\G)} + \|\chi\|^p_{L^{\infty}(\G)}\right)|z|^{p(1-s)}}{\eps^{Q+p}}\int_{B_{\eps}(0) \backslash B_{\eps/2}(0)}\int_{\G} |u(xy)-u(x)|^p \,dxdy.
\end{align*}
Let $\eps>|z|/2$, then
$$
\frac{|z|^{p(1-s)}}{\eps^{Q+p}} \leq \frac{2^{p(1-s)}} {\eps^{Q+ps}} \leq \frac{2^{p(1-s)}} {|y|^{Q+ps}}, \quad \forall \,\, y \in B_{\eps}(0) \backslash B_{\eps/2}(0).
$$
This implies that
\begin{align*}
\int_{\G}&\frac{|u(xz)-u(x)|^p}{|z|^{ps}} \,dx 
\\&\lesssim 2^{p(1-s)}\left(\|\nabla_{\G} \chi\|^p_{L^{\infty}(\G)} + \|\chi\|^p_{L^{\infty}(\G)}\right) \int_{B_{\eps}(0) \backslash B_{\eps/2}(0)}\int_{\G} \frac{|u(xy)-u(x)|^p}{|y|^{Q+ps}} \,dxdy\\
& \leq C \int_{\G}\int_{\G} \frac{|u(x)-u(y)|^p}{|y^{-1}x|^{Q+ps}} \,dxdy. 
\end{align*}
Hence the desirable conclusion follows and the proof is completed.
\end{proof}

Relying on Lemma \ref{fem}, we are now ready to present the proof of Theorem \ref{thm1}.

\begin{proof}[Proof of Theorem \ref{thm1}] Let $p \leq r<+\infty$. Since $ps=Q$, then there exists $0<s'<s<1$ such that $s'p<Q$ and $r<p_{s'}^*$. It then follows from Theorem \ref{thm0} that
\begin{align} \label{em11}
\|u\|_{L^r(\Omega)} \leq C \|u\|_{X^{s', p}_0(\Omega)}.
\end{align}
Let $u \in X^{s, p}_0(\Omega)$, by \eqref{defx}, then $u \in W^{s, p}(\G)$ and $u=0$ in $\G \backslash \Omega$. Observe that
\begin{align}\label{em21}
\begin{split}
[u]^p_{s', p, \G}&= \int_{\G} \int_{\G} \frac{|u(x)-u(y)|^p}{|y^{-1} x|^{Q+ps'}} \,dxdy=  \int_{\G} \int_{\G} \frac{|u(xz)-u(x)|^p}{|z|^{Q+ps'}} \,dxdz\\
&=\int_{\left\{z \in \G : |z|>1\right\}} \int_{\G} \frac{|u(xz)-u(x)|^p}{|z|^{Q+ps'}} \,dxdz\\&\quad\quad\quad\quad\quad+\int_{\left\{z \in \G : |z|\leq 1\right\}} \int_{\G} \frac{|u(xz)-u(x)|^p}{|z|^{Q+ps'}} \,dxdz.
\end{split}
\end{align}
By Lemma \ref{polarc} and Poincar\'e's inequality \eqref{pi}, one finds that
\begin{align*}
\int_{\left\{z \in \G : |z|>1\right\}} \int_{\G}& \frac{|u(xz)-u(x)|^p}{|z|^{Q+ps'}} \,dxdz 
\\ &\leq 2^{p-1} \int_{\left\{z \in \G : |z|>1\right\}}  \frac{1}{{|z|^{Q+ps'}}} \left(\int_{\G} |u(xz)|^p + |u(x)|^p  \,dx \right) dz \\
& \lesssim \int_{\Omega} |u|^p \,dx \lesssim  [u]_{s,p, \G}.
\end{align*}
On the other hand, by Lemmas \ref{polarc} and \ref{fem}, we have that
\begin{align*}
\int_{\left\{z \in \G : |z|\leq 1\right\}} \int_{\G}& \frac{|u(xz)-u(x)|^p}{|z|^{Q+ps'}} \,dxdz
\\&=\int_{\left\{z \in \G : |z|\leq 1\right\}} \left( \int_{\G} \frac{|u(xz)-u(x)|^p}{|z|^{ps}} \,dx \right) \frac{dz}{|z|^{Q-(s-s')p}}\\
& \lesssim [u]^p_{W^{s,p}(\G)}  \int_{\left\{z \in \G : |z|\leq 1\right\}}\frac{dz}{|z|^{Q-(s-s')p}} \lesssim  [u]^p_{W^{s,p}(\G)} .
\end{align*}
Going back to \eqref{em21}, we then get that $ [u]_{s',p, \G} \leq C [u]_{s,p, \G}$. It then follows from \eqref{em11} that
$$
\|u\|_{L^r(\Omega)} \leq C \|u\|_{X^{s, p}_0(\Omega)}.
$$
This clearly means that $X_0^{s,p}(\Omega)$ is continuously embedded in $L^r(\Omega)$ for any $p \leq r < + \infty$. 

Next we will   demonstrate that the embedding is compact for any $p \leq r <+\infty$. Let $\{u_n\} \subset X^{s, p}_0(\Omega)$ be a bounded sequence. Let $0<s<s'<1$ be such that $ps'<Q$ and $r<p_{s'}^*$. Then $\{u_n\}$ is bounded in $X^{s',p}(\Omega)$. By Theorem \ref{thm0}, then $\{u_n\}$ is compact in $L^r(\Omega)$. Then we have the desirable conclusion and the proof is completed.
\end{proof}

In the following, we are going to investigate the Morrey-Sobolev type embedding for the case $sp>Q$. We first have the following result.

\begin{lem} \label{fem1}
Let $\G$ be a stratified Lie group of homogeneous dimension $Q$ and let $\Omega \subset \G$ be an open subset. Let $0<s<1 \leq p<+\infty$ and $sp>Q$. Then there exists a constant $C=C(Q, p, s)>0$ such that, for any $u \in W^{s, p}(\G)$,
$$
\|u\|_{C^{0, \alpha}(\G)} \leq C \left(\|u\|_{L^p(\G)} + [u]_{s,p, \G}\right), \quad \alpha=\frac{ps-Q}{p}.
$$
\end{lem}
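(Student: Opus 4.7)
The approach is a fractional Morrey--Campanato-type argument: first control the $L^p$-mean oscillation of $u$ on quasi-balls by the Gagliardo seminorm alone, and then pass from this oscillation control to pointwise H\"older and sup estimates via a dyadic telescoping of averages. This mirrors the Euclidean fractional Morrey embedding (see e.g.\ \cite{DPV}), carried out with the homogeneous quasi-norm $|\cdot|$ on $\G$ and the doubling Haar measure.

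\emph{Step 1 (local oscillation estimate).} For any quasi-ball $B := B_r(x_0)$, set $u_B := |B|^{-1}\int_B u\,dy$. Writing $u(y) - u_B = |B|^{-1}\int_B (u(y) - u(z))\,dz$, applying Jensen's inequality for $|\cdot|^p$, and bounding $|z^{-1}y| \lesssim r$ on $B \times B$ via the quasi-triangle inequality together with $|B_r| = c_Q r^Q$, I obtain
\begin{align*}
\frac{1}{|B|}\int_B |u(y) - u_B|^p \, dy \lesssim \frac{r^{Q+ps}}{|B|^2} [u]^p_{s,p,\G} \lesssim r^{ps-Q}\, [u]^p_{s,p,\G}.
\end{align*}

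\emph{Step 2 (dyadic averages and pointwise deviation bound).} Fix $x_0 \in \G$, $r>0$ and set $u_k := u_{B_{2^{-k}r}(x_0)}$. Since $|B_{2^{-(k+1)}r}|=2^{-Q}|B_{2^{-k}r}|$, the inclusion $B_{2^{-(k+1)}r} \subset B_{2^{-k}r}$ combined with H\"older's inequality and Step~1 at radius $2^{-k}r$ yield $|u_{k+1}-u_k| \lesssim (2^{-k}r)^{\alpha} [u]_{s,p,\G}$ with $\alpha = s - Q/p = (ps-Q)/p > 0$. Summing this geometric series gives $|u_K - u_0| \lesssim r^\alpha [u]_{s,p,\G}$ uniformly in $K$, and the Lebesgue differentiation theorem on the doubling quasi-metric measure space $(\G, |\cdot|, dx)$ yields $u_K \to u(x_0)$ almost everywhere, so
\begin{align*}
|u(x_0) - u_{B_r(x_0)}| \lesssim r^\alpha [u]_{s,p,\G} \quad \text{for a.e.\ } x_0 \in \G \text{ and every } r>0.
\end{align*}

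\emph{Step 3 (H\"older and $L^\infty$ bounds).} For a.e.\ $x_0, y_0 \in \G$ with $d := |y_0^{-1}x_0|$, choose a common quasi-ball $\widetilde B$ of radius $\asymp d$ containing both $B_d(x_0)$ and $B_d(y_0)$ via the quasi-triangle inequality, and telescope
\begin{align*}
|u(x_0)-u(y_0)| \leq |u(x_0)-u_{B_d(x_0)}| + |u_{B_d(x_0)}-u_{\widetilde B}| + |u_{\widetilde B}-u_{B_d(y_0)}| + |u_{B_d(y_0)}-u(y_0)|.
\end{align*}
The two outer terms are $\lesssim d^\alpha [u]_{s,p,\G}$ directly by Step~2; each middle term is controlled by $|u_{B_d(\cdot)} - u_{\widetilde B}| \leq (|\widetilde B|/|B_d(\cdot)|)^{1/p}\bigl(|\widetilde B|^{-1}\int_{\widetilde B}|u-u_{\widetilde B}|^p\bigr)^{1/p}$ (Jensen plus inclusion) followed by Step~1 on $\widetilde B$, giving again $\lesssim d^\alpha [u]_{s,p,\G}$. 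A continuous representative then satisfies $[u]_{C^{0,\alpha}(\G)} \lesssim [u]_{s,p,\G}$. For the sup bound I specialize Step~2 to $r=1$ and use $|u_{B_1(x_0)}| \leq |B_1|^{-1/p}\|u\|_{L^p(\G)}$ by H\"older's inequality, concluding $\|u\|_{L^\infty(\G)} \lesssim \|u\|_{L^p(\G)} + [u]_{s,p,\G}$, which combined with the H\"older bound delivers the claim. The main obstacle I anticipate is bookkeeping rather than any deep analytic step: one must track the quasi-triangle constant $C_0 \geq 1$ of $|\cdot|$ carefully through the ball inclusions and volume ratios in Steps 2--3, verifying that all absorbed constants depend only on $Q, p, s$ (and the fixed quasi-norm), not on the base points or radii.
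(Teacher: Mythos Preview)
Your proposal is correct and follows essentially the same Morrey--Campanato/dyadic-telescoping route as the paper's proof (cf.\ \cite[Theorem~8.2]{DPV}): the paper's estimate \eqref{em33} is your Step~1, its inequality \eqref{ineq}--\eqref{ineq3} is your Step~2, and its argument around \eqref{em34}--\eqref{h2} is your Step~3. The only cosmetic differences are that the paper factors the argument through the intermediate Campanato seminorm $[u]_{p,ps}$ before bounding it by $[u]_{s,p,\G}$, and in Step~3 it averages over the intersection $B_{2r}(x)\cap B_{2r}(x')$ rather than passing through a common enclosing ball $\widetilde B$; both variants are standard and equivalent.
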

\begin{proof}
To establish this lemma, we will   take advantage of the elements presented in the proof of \cite[Theorem 8.2]{DPV}. Define
$$
u_{x_0, r}:=\frac{1}{|B_r(x_0)|}\int_{B_{r}(x_0)} u(x) \,dx,
$$
$$
[u]_{p, ps}:=\left(\sup_{x_0 \in \G, r>0} \frac{1}{r^{ps}}\int_{B_r(x_0)} |u-u_{x_0, r}|^p \,dx \right)^{\frac 1p}.
$$
Let us first claim that there exists a constant $C=C(Q, s, p)>0$ such that, for any $x_0 \in \G$ and $0<r'<r<+\infty$, there holds that
\begin{align} \label{ineq}
|u_{x_0, r}-u_{x_0, r'}| \leq C[u]_{p,ps} |B_r(x_0)|^{\frac{ps-Q}{Qp}}.
\end{align}
To prove this claim, let us take $0<\rho_1<\rho_2<r$. Then, by H\"older's inequality, we see that
\begin{align} \label{em31}
\begin{split}
|u_{x_0, \rho_1}-u_{x_0, \rho_2}| &\leq \frac{1}{|B_{\rho_1}(x_0)|} \int_{B_{\rho_1}(x_0)} |u(x)-u_{x_0, \rho_2}| \,dx \\
& \leq \frac{1}{|B_{\rho_1}(x_0)|^{\frac 1p}}\left(\int_{B_{\rho_1}(x_0)} |u(x)-u_{x_0, \rho_2}|^p \,dx \right)^{\frac 1p} \\
& \leq  \frac{1}{|B_{\rho_1}(x_0)|^{\frac 1p}}\left(\int_{B_{\rho_2}(x_0)} |u(x)-u_{x_0, \rho_2}|^p \,dx \right)^{\frac 1p} \\
& \lesssim \left(\frac{\rho_2}{\rho_1}\right)^s |B_{\rho_1}(x_0)|^{\frac{ps-Q}{Qp}}  [u]_{p, ps},
\end{split}
\end{align}
where we have used the fact that $|B_{\rho_1}(x_0)| \approx \rho_1^Q.$

Define $r_i=2^{-i} r$, $i \in \mathbb{N} \cup \{0\}$. It then follows from \eqref{em31} that
$$
|u_{x_0, r_{i+1}}-u_{x_0, r_i}| \lesssim |B_{r_{i+1}}(x_0)|^{\frac{ps-Q}{Qp}} [u]_{p, ps}.
$$
As a result, there holds for $k \in \N$ that
\begin{align} \label{ineq1}
|u_{x_0, r_{k}}-u_{x_0, r}| \lesssim |B_{r}(x_0)|^{\frac{ps-Q}{Qp}} [u]_{p, ps}.
\end{align}
We now choose $k \in \N$ such that $r_k < r'\leq r_{k-1}<r$. Therefore, by \eqref{em31}, we get that
$$
|u_{x_0, r'}-u_{x_0, r_k}| \lesssim \left(\frac{r'}{r_k}\right)^s |B_{r_k}(x_0)|^{\frac{ps-Q}{Qp}} [u]_{p, ps} \lesssim |B_{r}(x_0)|^{\frac{ps-Q}{Qp}} [u]_{p, ps},
$$
which along with \eqref{ineq1} plainly leads to the desired conclusion \eqref{ineq}.

At this point, we now take into account \eqref{ineq} to prove Lemma \ref{fem1}. Let $u \in W^{s,p}(\G)$. Using Jensen's inequality, we know that
$$
\int_{B_r(x_0)} |u(x)-u_{x_0, r}|^p \,dx \leq \frac{1}{|B_r(x_0)|} \int_{B_r(x_0)} \int_{B_r(x_0)} |u(x)-u(y)|^p\,dxdy.
$$
Observe that if $x, y \in B_r(x_0)$, then $|y^{-1}x| \leq 2r$. Therefore, we are able to conclude that
\begin{align} \label{em33}
\int_{B_r(x_0)}& |u(x)-u_{x_0, r}|^p\,dx \nonumber \\ &\leq  \frac{1}{|B_r(x_0)|} \int_{B_r(x_0)} \int_{B_r(x_0)} \frac{|u(x)-u(y)|^p}{|y^{-1}x|^{Q+ps}}|y^{-1}x|^{Q+ps}\,dxdy \lesssim r^{ps} [u]_{s,p, \G}.
\end{align}
It then immediately follows that $[u]_{p, ps} \lesssim [u]_{p, s, \G}$. Consequently, by \eqref{ineq}, we have that
\begin{align} \label{emlimit}
|u_{x, r}-u_{x, r'}| \lesssim [u]_{p,s, \G} |B_r(x)|^{\frac{ps-Q}{Qp}}.
\end{align}
This indicates that $\lim_{r \to 0} u_{x, r}$ exists uniformly for $x \in \G$. Passing the limit as $r'$ goes to zero and applying \eqref{emlimit}, we then get that
\begin{align} \label{ineq3}
|u_{x, 2r}-u(x)| \lesssim [u]_{p,s, \G} |B_r(x)|^{\frac{ps-Q}{Qp}}.
\end{align}
Let $x, x' \in \G$ be such that $|x^{-1} x'|=r$. Then there holds that
\begin{align} \label{em34}
|u(x)-u(x')| \leq |u(x)-u_{x, 2r}| + |u(x')-u_{x',2r}|+|u_{x, 2r}-u_{x', 2r}|.
\end{align}
Let $x'' \in B_{2r}(x) \cap B_{2r}(x')$. Observe that
$$
|u_{x, 2r}-u_{x', 2r}| \leq |u(x'')-u_{x, 2r}| +|u(x'')-u_{x', 2r}|.
$$
It then follows that
\begin{align} \label{estr}
|u_{x, 2r}-u_{x', 2r}| &\leq \frac{1}{|B_{2r}(x)|}\int_{B_{2r}(x)} |u(x'')-u_{x, 2r}| \,dx''\nonumber \\&\quad\quad +\frac{1}{|B_{2r}(x')|}\int_{B_{2r}(x)} |u(x'')-u_{x', 2r}| \,dx''.
\end{align}
As an application of H\"older's inequality and \eqref{em33} implies that
$$
 \frac{1}{|B_{2r}(x)|}\int_{B_{2r}(x)} |u(x'')-u_{x, 2r}| \,dx'' \lesssim r^{-\frac{Q}{p}} \left(\int_{B_{2r}(x)} |u(x'')-u_{x, 2r}|^p \,dx''\right)^{\frac 1p} \lesssim r^{\frac{ps-Q}{p}}[u]_{s, p, \G}.
$$
Similarly, there holds that
$$
 \frac{1}{|B_{2r}(x)|}\int_{B_{2r}(x)} |u(x'')-u_{x, 2r}| \,dx'' \lesssim r^{\frac{ps-Q}{p}}[u]_{s, p, \G}.
$$
This together with \eqref{estr} implies that
$$
|u_{x, 2r}-u_{x', 2r}| \lesssim r^{\frac{ps-Q}{p}}[u]_{s, p, \G}.
$$
Consequently, exploiting \eqref{ineq3} and \eqref{em34}, we arrive at
$$
|u(x)-u(x')| \lesssim r^{\frac{ps-Q}{p}}[u]_{s, p, \G}= |x^{-1} x'|^{\frac{ps-Q}{p}}[u]_{s, p, \G}.
$$
This clearly infers that
\begin{align} \label{h1}
[u]_{C^{0, \alpha}(\G)}:=\sup_{x \neq x'} \frac{|u(x)-u(x')|}{ |x^{-1} x'|^{\frac{ps-Q}{p}}} \lesssim [u]_{s, p, \G}.
\end{align}
In addition, by \eqref{ineq3} and H\"older's inequality, there holds that, for some $r_0>0$,
\begin{align} \label{h2}
\begin{split}
|u(x)| & \lesssim \left(\frac{1}{|B_{2r_0}(x)|} \int_{B_{2r_0}(x)}|u(y)|\,dy + |B_{r_0}(x)|^{\frac{ps-Q}{Qp}} [u]_{s,p,\G}\right) \\
& \lesssim \left(\frac{1}{|B_{2r_0}(x)|^{\frac 1p}} \left(\int_{B_{2r_0}(x)}|u(y)|^p\,dy\right)^{\frac 1p} + |B_{r_0}(x)|^{\frac{ps-Q}{Qp}} [u]_{s,p,\G}\right) \\
& \lesssim \|u\|_{L^p(\G)} + [u]_{s,p,\G}.
\end{split}
\end{align}
Hence the desirable conclusion follows by combining \eqref{h1} and \eqref{h2}. Indeed, we have 
\begin{align*}
     \|u\|_{C^{0, \alpha}(\G)}&= \|u\|_{L^\infty(\G)}+[u]_{C^{0, \alpha}(\G)} 
     \leq C (\|u\|_{L^p(\G)} + [u]_{s,p,\G}),
\end{align*}
 completing the proof.
\end{proof}

To show that the embedding for the case $sp>Q$ is compact, we need to introduce the following result, which can be regarded as a general version of the well-known Arzel\'a-Ascoli theorem.

\begin{lem} \label{AA} \cite[Proposition 3.2]{B}
Suppose that $X$ is a compact set and $U \subset C(X)$ is a bounded subset such that, for any $x \in X$ and $\eps>0$, there exists a neighborhood of $N$ of $x$ such that $|u(x)-u(y)| \leq \eps$ for any $y \in N$ and $u \in U$. Then any sequence in $U$ has a uniformly convergent subsequence.
\end{lem}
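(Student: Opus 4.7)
The plan is to prove the statement by establishing that $U$ is totally bounded in $C(X)$ equipped with the sup-norm; since $C(X)$ is a complete metric space, total boundedness is equivalent to relative compactness of $U$, and hence to the sequential conclusion of the lemma. The strategy is to use compactness of $X$ together with the hypothesis on $U$ to reduce the uniform problem on $X$ to a finite-dimensional problem on $\R^n$, where total boundedness follows from the global bound on $U$.

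First, I would fix $\eps>0$ and apply the hypothesis: for each $x\in X$, choose a neighborhood $N_x$ such that $|u(x)-u(y)|\le\eps$ for every $y\in N_x$ and every $u\in U$. Compactness of $X$ allows me to extract a finite subcover $N_{x_1},\dots,N_{x_n}$ of $X$. Next, using boundedness of $U$, set $M:=\sup_{u\in U}\|u\|_{L^\infty(X)}<+\infty$ and define the evaluation map $\Phi:U\to\R^n$ by $\Phi(u):=(u(x_1),\dots,u(x_n))$. Then $\Phi(U)\subset[-M,M]^n$, a compact subset of $\R^n$, hence totally bounded. I would then select finitely many functions $u_1,\dots,u_k\in U$ whose images $\Phi(u_1),\dots,\Phi(u_k)$ form an $\eps$-net for $\Phi(U)$ in the $\ell^\infty$-norm on $\R^n$.

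The third step is to verify that $\{u_1,\dots,u_k\}$ is a $3\eps$-net for $U$ in $C(X)$. Given any $u\in U$, pick $j$ such that $|u(x_i)-u_j(x_i)|\le\eps$ for every $i=1,\dots,n$. For an arbitrary $y\in X$, choose $i$ with $y\in N_{x_i}$; applying the equicontinuity hypothesis separately to $u$ and to $u_j$ and combining with the triangle inequality gives
$$
|u(y)-u_j(y)|\le|u(y)-u(x_i)|+|u(x_i)-u_j(x_i)|+|u_j(x_i)-u_j(y)|\le 3\eps.
$$
Taking the supremum over $y\in X$ produces $\|u-u_j\|_{L^\infty(X)}\le 3\eps$, so $\{u_1,\dots,u_k\}$ is indeed a $3\eps$-net for $U$. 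Since $\eps>0$ was arbitrary, $U$ is totally bounded in the complete metric space $C(X)$; its closure is therefore compact, and every sequence in $U$ admits a uniformly convergent subsequence.

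There is essentially no genuine obstacle here, as this is a classical Arzel\`a--Ascoli-type argument. The only subtle point worth flagging is that $X$ is assumed only to be compact rather than metrizable, so arguments relying on a countable dense subset together with a diagonal extraction are unavailable. The proof sketched above avoids that difficulty entirely by working through finite subcovers and the finite-dimensional reduction via $\Phi$, so it applies in the full generality of the statement.
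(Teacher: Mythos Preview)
The paper does not supply its own proof of this lemma; it is simply quoted as \cite[Proposition 3.2]{B} and used as a black box in the proof of Theorem~\ref{thm2}. Your argument via total boundedness in $C(X)$ is correct and self-contained, and your observation that the finite-subcover approach sidesteps any need for metrizability of $X$ (which a diagonal-extraction argument would require) is exactly the point. Since there is no in-paper proof to compare against, nothing further is needed.
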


Building upon the results established previously, we are now in a position to present the proof of Theorem \ref{thm2}.

\begin{proof}[Proof of Theorem \ref{thm2}] 
Let $u \in X^{s, p}_0(\Omega)$, by \eqref{defx}, then $u \in W^{s, p}(\G)$ and $u=0$ in $\G \backslash \Omega$. Using Lemma \ref{fem1}, we then have that $u \in C^{0, \alpha}(\Omega)$ and
$$
\|u\|_{C^{0, \alpha}(\Omega)} \lesssim \left(\|u\|_{L^p(\Omega)} + [u]_{s, p, \G}\right).
$$
Then, by Poincar\'e's inequality \eqref{pi}, we obtain that
$$
\|u\|_{C^{0, \alpha}(\Omega)} \lesssim \|u\|_{X^{s,p}_0(\Omega)}.
$$
This obviously infers that $X_0^{s,p}(\Omega)$ is continuously embedded into $C^{0, \alpha}(\Omega)$ proving \eqref{emthm2}.

Next we are going to show that the embedding $X^{s,p}_0(\Omega)$ is compactly embedded into $C^{0, \beta}(\overline{\Omega})$ is compact for any $0<\beta<\alpha$ when $\Omega$ is further assumed to be bounded. We first prove that $X^{s,p}_0(\Omega)\hookrightarrow C(\bar{\Omega})$ is compact. We will use the Arzela-Ascoli theorem and Lemma \ref{AA}. Let $K$ be a bounded subset of $X^{s,p}_0(\Omega)$. Therefore, there exists a constant $C>0$ such that for all $u\in K$, we have
\begin{equation*}
    \|u\|_{L^{\infty}(\Omega)}\leq C \|u\|_{X_{0}^{s,p}(\Omega)}.
\end{equation*}
Moreover, for all $x,y\in\Omega$, we have
\begin{equation*}
    |u(x)-u(y)|\leq C \|u\|_{X_{0}^{s,p}(\Omega)}|y^{-1}x|^{\alpha}.
\end{equation*}
Therefore, we get $K$ is bounded in $L^{\infty}(\Omega)$ as well as $K$ is equicontinuous. Thus using Lemma \ref{AA}, we conclude the embedding $X^{s,p}_0(\Omega)\hookrightarrow C(\bar{\Omega})$ is compact.\\
Next, we show that the embedding $X^{s,p}_0(\Omega)\hookrightarrow C^{0, \beta}(\Omega)$ is compact for any $0<\beta<\alpha$. Let $\{u_n\} \subset X^{s,p}_0(\Omega)$ be a bounded sequence. Then $\{u_n\}$ is bounded in $C^{0, \alpha}(\Omega)$. In addition, we see that
$$
\sup_{x \in \Omega} |u_n(x)| \leq C \|u_n\|_{X^{s,p}_0(\Omega)}, \quad \sup_{x, y \in \Omega} \frac{|u_n(x)-u_n(y)|}{|y^{-1}x|^{\alpha}} \leq C \|u_n\|_{X^{s,p}_0(\Omega)}.
$$
Invoking Lemma \ref{AA}, we then have that $\{u_n\}$ has a uniformly convergent subsequence in $C(\Omega)$. Without restriction, we may assume that $u_n \to u$ in $C(\Omega)$ as $n \to \infty$. Observe that
$$
\frac{|(u_n-u)(x)-(u_n-u)(y)|}{|y^{-1}x|^{\beta}}=\lim_{m \to \infty}\frac{|(u_n-u_m)(x)-(u_n-u_m)(y)|}{|y^{-1}x|^{\beta}} \leq C |y^{-1}x|^{\alpha-\beta},
$$
$$
\frac{|(u_n-u)(x)-(u_n-u)(y)|}{|y^{-1}x|^{\beta}} \leq \frac{2}{|y^{-1}x|^{\beta}} \|u_n-u\|_{C(\Omega)}.
$$
As a consequence, we conclude that $\|u_n-u\|_{C^{0, \beta}(\Omega)}=o_n(1)$. Therefore, we conclude that $X^{s,p}_0(\Omega)\hookrightarrow C^{0, \beta}(\bar{\Omega})$ is compact for any $0<\beta<\alpha$. This completes the proof.
\end{proof}

\section{Applications: Subelliptic nonlocal problems involving oscillating nonlinearities} \label{application}

In this section, we will present applications of the embedding results to subelliptic nonlocal problems involving oscillating nonlinearities. The main purpose of this section is to give the proof Theorems \ref{mainthmintrnon} and \ref{color1.1}. In fact, to establish the existence and asymptotic behaviors of solutions to \eqref{pro1intro}, we are going to adapt the variational principle relying on Ricceri \cite{Ric00}.

\begin{defi}
We say that a function $u \in X_{0}^{s, p}(\Omega)\backslash\{0\}$ is a (weak) solution to \eqref{pro1intro} if 
\begin{align*}
\int_{\G}\int_{\G} \frac{|u(x)-u(y)|^{p-2}(u(x)-u(y))(\varphi(x)-\varphi(y))}{|y^{-1}x|^{Q+sp}} \, dx dy = \lambda \int_{\Omega} \beta(x) g(u) \varphi \, dx
\end{align*} 
holds for any $\varphi \in X^{s, p}_0(\Omega)$.
\end{defi}

The underlying energy function $\mathfrak{I}_\lambda :X^{s, p}_0(\Omega) \rightarrow \mathbb{R}$ is defined by 
\begin{equation}
\mathfrak{I}_\lambda(u) := \frac{1}{p}\int_{\G}\int_{\G} \frac{|u(x)-u(y)|^{p}}{|y^{-1}x|^{Q+sp}}\,dx dy - \lambda \int_{\Omega} \beta(x) G(u)\,dx,
\end{equation} 
where 
$$
G(\tau):= \int_{0}^\tau g(\xi)\,\, d\xi, \quad \forall \,\, \tau \in \R.
$$
Since $\beta \in L^\infty(\Omega)$ and $g$ is continuous, we then derive that $\mathfrak{I}_\lambda \in C^1(X^{s, p}_0(\Omega), \R)$ and its derivative is given by 
\begin{align*}
\langle \mathfrak{I}'_\lambda(u), \varphi \rangle =\int_{\G}\int_{\G}  \frac{|u(x)-u(y)|^{p-2}(u(x)-u(y))(\varphi(x)-\varphi(y))}{|y^{-1}x|^{Q+sp}} dx dy-\lambda \int_{\Omega} \beta(x) g(u) \varphi dx 
\end{align*}
for any $\varphi \in X^{s, p}_0(\Omega)$. It immediately follows that solutions to \eqref{pro1intro} correspond to critical points of the energy function $\mathfrak{I}_\lambda$. Therefore, to prove the existence of solutions, it is sufficient to find critical points of $\mathfrak{I}_\lambda$ via variational methods. To this end, we will make use of the following abstract result due to Bonanno and Bisci \cite{BB, BB1}, which is a refinement of the variational principle developed by Ricceri \cite{Ric00}. 

\begin{thm}\label{ricthm} 
Let $X$ be a reflexive real Banach space and let $\Phi,\Psi:X\to\R$ be two G\^{a}teaux differentiable functionals
such that $\Phi$ is strongly continuous, sequentially weakly lower semicontinuous and coercive, $\Psi$ is sequentially weakly upper semicontinuous. For every $r>\inf_X \Phi,$ put
$$
\varphi(r):=\inf_{u \in \Phi^{-1}((-\infty, r))}\frac{\displaystyle\sup_{v \in ({\Phi^{-1}((-\infty, r))})}\Psi(v)-\Psi(u)}{r-\Phi(u)},
$$
$$
\gamma:=\liminf_{r \to +\infty}\varphi(r),\quad \delta:=\liminf_{r \to (\inf_X \Phi)^+}\varphi(r).
$$
Then the following conclusions hold:
\begin{itemize}
\item[$(\textnormal{i})$] If $\gamma<+\infty$, then, for every $0<\lambda<{1}/{\gamma}$, the following alternative holds:\\
{\rm $(a)$} either $\Phi-\lambda \Psi$ possesses a global minimum,\\
{\rm $(b)$} or there is a sequence of critical points $\{u_n\}$ (local minima) of $\Phi-\lambda \Psi$ such that 
$$
\lim_{n\to \infty} \Phi(u_n)=+\infty.
$$
\item[$(\textnormal{ii})$] If $\delta<+\infty$, then, for every $0<\lambda<{1}/{\delta}$, the following alternative holds:\\
{\rm $(a)$} either there is a global minimizer of $\Phi$ which is a local minimum of $\Phi-\lambda \Psi$,\\
{\rm $(b)$} or there is a sequence of pairwise distinct critical points $\{u_n\}$  (local minima) of $\Phi-\lambda\Psi$
such that
$$
\displaystyle\lim_{n\to \infty} \Phi(u_n)=\inf_{u\in X}\Phi(u).
$$
\end{itemize}
\end{thm}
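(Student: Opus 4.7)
The plan is to reduce Theorem~\ref{color1.1} to Theorem~\ref{mainthmintrnon} by combining two observations: condition~\eqref{condition} forces the admissible parameter interval to be all of $(0,+\infty)$, and a suitable truncation of the nonlinearity yields solutions that are automatically nonnegative, so that they solve the original problem.

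First, I would check that the hypotheses of Theorem~\ref{mainthmintrnon} are satisfied. Since $g\ge 0$ with $g(0)=0$, the primitive $G$ is nondecreasing on $[0,+\infty)$ and nonincreasing on $(-\infty,0]$, so $G\le 0$ on $(-\infty,0]$ and hence $\max_{|\xi|\le \tau}G(\xi)=\max_{0\le \xi\le \tau}G(\xi)$ for every $\tau>0$. Combined with~\eqref{condition}, this gives $A_M=0$, $B_M=+\infty$, and $\inf_{\tau\ge 0}G(\tau)=G(0)=0$, and the strict inequality $A_M<DB_M$ holds trivially. Substituting into the definitions of $\lambda_{1,M}$ and $\lambda_{2,M}$ produces $\lambda_{1,M}=0$ and $\lambda_{2,M}=+\infty$, so every $\lambda>0$ lies in the admissible range of Theorem~\ref{mainthmintrnon}.

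Second, because Theorem~\ref{mainthmintrnon} does not provide sign information on its solutions, I would introduce the truncated nonlinearity $\tilde g(t):=g(t)$ for $t\ge 0$ and $\tilde g(t):=0$ for $t<0$, which is continuous (since $g(0)=0$) and nonnegative. Its primitive $\tilde G$ coincides with $G$ on $[0,+\infty)$ and vanishes on $(-\infty,0]$; since $A_M$, $B_M$ and $\inf_{\tau\ge 0}G$ depend only on the values of $G$ on the nonnegative half-line, the truncated problem satisfies the very same quantitative hypotheses. Theorem~\ref{mainthmintrnon} applied to the truncated problem then delivers, for every $\lambda>0$, a sequence $\{u_{\lambda,n}\}\subset X_0^{s,p}(\Omega)$ with exactly the norm asymptotics stated in Theorem~\ref{color1.1}.

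Third, I would upgrade each $u_{\lambda,n}$ to a nonnegative solution of~\eqref{pro1intro} by testing its weak formulation against $\varphi:=u_{\lambda,n}^-\in X_0^{s,p}(\Omega)$. The right-hand side vanishes: on $\{u_{\lambda,n}\ge 0\}$ we have $u_{\lambda,n}^-=0$, while on $\{u_{\lambda,n}<0\}$ we have $\tilde g(u_{\lambda,n})=0$. The left-hand side is bounded above, via the standard pointwise inequality
$$
|a-b|^{p-2}(a-b)(a^--b^-)\le -|a^--b^-|^p, \qquad a,b\in\R,\ p>1,
$$
applied inside the integrand, by $-[u_{\lambda,n}^-]_{s,p,\G}^p$. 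Combining the two bounds forces $u_{\lambda,n}^-\equiv 0$, hence $u_{\lambda,n}\ge 0$ and $\tilde g(u_{\lambda,n})=g(u_{\lambda,n})$, so $u_{\lambda,n}$ is in fact a nonnegative solution to~\eqref{pro1intro}. The main technical point is the above pointwise inequality and its interplay with the truncated nonlinearity; the remainder is a direct application of Theorem~\ref{mainthmintrnon}.
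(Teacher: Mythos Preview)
Your proposal addresses the wrong statement. The theorem you were asked to prove is Theorem~\ref{ricthm}, the abstract variational principle of Bonanno--Bisci/Ricceri, whereas your write-up is a proof of Theorem~\ref{color1.1}. In the paper, Theorem~\ref{ricthm} is not proved at all: it is quoted from \cite{BB,BB1} (as a refinement of \cite{Ric00}) and used as a black box in the proofs of Theorems~\ref{mainthmintrnon} and~\ref{color1.1}. A genuine proof would require reproducing the Ricceri-type argument (construction of sublevel sets of $\Phi$, the minimax/lsc machinery yielding local minima of $\Phi-\lambda\Psi$, and the alternative between a global minimum and a sequence of critical points), none of which appears in your proposal.

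For what it is worth, if the target had been Theorem~\ref{color1.1}, your argument is essentially identical to the paper's own proof: the paper also observes that~\eqref{condition} together with $g\ge 0$, $g(0)=0$ gives $A_M=0$, $B_M=+\infty$ (hence $\lambda_{1,M}=0$, $\lambda_{2,M}=+\infty$), applies Theorem~\ref{mainthmintrnon} to the truncated nonlinearity $g^+$, and then tests against $-u^-$ using the pointwise inequality $|a^--b^-|^p\le |a-b|^{p-2}(a-b)(b^--a^-)$ to conclude $u^-\equiv 0$. But this does not address the statement actually posed.
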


In the following, we will apply Theorem \ref{ricthm} to show the existence of solutions to \eqref{pro1intro}. Let $\Omega$ be an open subset of $\G$ and $B_{\tau}(x_0)$ be the ball center at $x_0 \in \G$ and of radius $\tau>0.$ As $\Omega$ is open subset of $\G$, then it is possible to choose $x_0 \in \Omega$ and $\tau>0$ such that $B_{\tau}(x_0) \subset \Omega$. We set $x_0 \in \G$ as the Chebyshev center of $\Omega$ and $r:=\sup_{x \in \Omega} \text{dist}(x, \partial \Omega).$ For the fixed $x_0$ and $r>0$, we define a function $T: \G \to \R$ by
\begin{align} \label{testfun}
T(x):= \begin{cases} 
0 & \text{if}\quad x \in \mathbb{G} \setminus B_{r}(x_0),\\
1 & \text{if} \quad x \in B_{\frac{r}{2}}(x_0),\\
\frac{2(r-|x_0^{-1}x|)}{r} &\text{if}\quad x \in B_r(x_0) \setminus B_{\frac{r}{2}}(x_0),
\end{cases}
\end{align}
where $|\cdot|$ denotes the quasi-norm on $\G.$ Since $T$ is zero outside of the compact ball $B_r(x_0) \subset \Omega$, we then see that $T \in X^{s,p}_0(\Omega).$

\begin{lem}\label{lmest}
Let $T \in X_0^{s,p}(\Omega)$ be the function defined by \eqref{testfun}. Then the following estimate holds true,
\begin{equation}\label{est}
\int_{\G}\int_{\G} \frac{|T(x)-T(y)|^p}{|y^{-1}x|^{Q+sp}} dx dy \leq C_{p, Q, s} \omega_Q^2 r^{Q-sp}, 
\end{equation}
where $\omega_Q$ is the volume  of the unit sphere with respect to the quasi-norm $|\cdot |$ on $\G$ and 
\begin{equation*} 
C_{p, Q, s}:=\left(\frac{2^{p(2-s)}}{(1-s)p}+\frac{2^{2+ps}}{ps}\right)\left(1-\frac{1}{2^Q}\right)+\frac{2^{1+ps-Q}}{ps}.
\end{equation*}
\end{lem}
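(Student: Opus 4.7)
The plan is to exploit the piecewise structure of $T$. By the left-invariance of the Haar measure and of the quasi-distance $|y^{-1}x|$, I may assume $x_0=0$, so that $T=f(|x|)$ with $f$ affine of slope $-2/r$ on $[r/2,r]$ and constant outside. I would then split $\G\times\G$ into the nine products determined by the three level sets $A_1:=B_{r/2}(0)$, $A_2:=B_r(0)\setminus B_{r/2}(0)$, $A_3:=\G\setminus B_r(0)$, on which $T\equiv 1$, $T$ is the linear interpolation, and $T\equiv 0$, respectively. Only four of these products contribute to the double integral: $A_2\times A_2$, $A_1\times A_2$, $A_2\times A_3$, and $A_1\times A_3$ (the last three doubled by the symmetry of the integrand in $(x,y)$).

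The key analytic ingredient I would use is the Lipschitz estimate $|T(x)-T(y)|\leq (2/r)\,|y^{-1}x|$ on all of $\G\times\G$, a consequence of the reverse triangle inequality $\bigl||x|-|y|\bigr|\leq |y^{-1}x|$ applied to the one-variable $2/r$-Lipschitz profile $f$. Combined with the trivial pointwise bound $|T(x)-T(y)|\leq 1$, this gives two regimes: a Lipschitz regime that is efficient when $|y^{-1}x|$ is small, and an $L^\infty$ regime that is efficient when $|y^{-1}x|$ is large.

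For each piece I would pick the sharper regime. On $A_2\times A_2$, the Lipschitz bound combined with $|y^{-1}x|\leq 2r$ (triangle inequality inside $B_r(0)$) reduces the integral to $\int_{|z|\leq 2r}|z|^{p-Q-ps}\,dz$, which Lemma~\ref{polarc} computes as $\omega_Q(2r)^{p(1-s)}/(p(1-s))$; pairing with the annular volume $|A_2|$, which carries the factor $(1-1/2^Q)$, recovers the first summand of $C_{p,Q,s}$. On $A_1\times A_3$, the triangle inequality forces $|y^{-1}x|\geq r/2$ while $|T(x)-T(y)|=1$, so the contribution is bounded by $|A_1|\int_{|z|\geq r/2}|z|^{-Q-ps}\,dz$, which polar coordinates turn into a multiple of $\omega_Q^2 r^{Q-ps}$ matching the last summand $2^{1+ps-Q}/(ps)$. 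For the mixed pieces $A_1\times A_2$ and $A_2\times A_3$, I would split each further into a near-diagonal part handled by the Lipschitz bound and a far part $|y^{-1}x|\geq r/2$ handled by the $L^\infty$ bound; these together produce the middle summand $2^{2+ps}(1-1/2^Q)/(ps)$.

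The main obstacle is combinatorial rather than conceptual: the dyadic thresholds $r/2$ and $2r$, the annular volume factor $(1-1/2^Q)=|A_2|/|B_r|$, and the powers of $2$ generated by the Lipschitz constant, the triangle inequality, and the polar integration must align exactly to recover the stated constant $C_{p,Q,s}$ rather than a cruder majorant. No genuinely new analytic idea is required beyond the Lipschitz bound on $T$, the reverse triangle inequality for the left-invariant quasi-distance, and the polar decomposition of Lemma~\ref{polarc}.
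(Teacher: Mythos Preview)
Your decomposition into the four contributing pieces and your treatment of $A_2\times A_2$ and $A_1\times A_3$ coincide with the paper's and reproduce the first and last summands of $C_{p,Q,s}$. The gap is in the two mixed pieces. Your near/far split, with the global Lipschitz bound $|T(x)-T(y)|\le (2/r)|y^{-1}x|$ on $\{|y^{-1}x|<r/2\}$ and the $L^\infty$ bound on the complement, gives after integration over $A_2$ and doubling the constant
\[
\frac{2^{1+ps}}{(1-s)p}\Bigl(1-\frac{1}{2^Q}\Bigr)+\frac{2^{1+ps}}{ps}\Bigl(1-\frac{1}{2^Q}\Bigr)
\]
for each of $A_2\times A_3$ and $A_1\times A_2$, so your middle summand comes out as $\dfrac{2^{2+ps}}{s(1-s)p}(1-1/2^Q)$, larger than the stated $\dfrac{2^{2+ps}}{ps}(1-1/2^Q)$ by the factor $1/(1-s)$. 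Thus your plan yields a correct inequality of the form $C'\omega_Q^2 r^{Q-sp}$, but not with the exact $C_{p,Q,s}$.

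The paper avoids this loss by \emph{not} using the Lipschitz bound on the mixed pieces. On $A_2\times A_3$ one has $T(x)=0$, hence $|T(x)-T(y)|^p=(2/r)^p(r-|y|)^p$ \emph{exactly}; this is constant in $x$ and pulls out of the inner integral, while the reverse triangle inequality gives
\[
\int_{A_3}\frac{dx}{|y^{-1}x|^{Q+ps}}\le \frac{\omega_Q}{ps}\,(r-|y|)^{-ps}.
\]
The key is the cancellation $(r-|y|)^p\cdot(r-|y|)^{-ps}=(r-|y|)^{(1-s)p}\le (r/2)^{(1-s)p}$ on $A_2$, which produces $\dfrac{2^{1+ps}}{ps}(1-1/2^Q)$ with no residual near-diagonal term. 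The piece $A_1\times A_2$ is handled symmetrically via $|T(x)-T(y)|^p=(2/r)^p(|x|-r/2)^p$ and $\int_{A_1}|y^{-1}x|^{-Q-ps}\,dy\le \dfrac{\omega_Q}{ps}(|x|-r/2)^{-ps}$. So to recover the stated constant you must replace your Lipschitz/$L^\infty$ split on the mixed pieces by this distance-to-boundary cancellation.
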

\begin{proof}
We are going to prove the estimate \eqref{est} by dividing the integral domain $\mathbb{G} \times \mathbb{G}$ into the subdomains. Note that 
\begin{align*}
\mathbb{G} \times \mathbb{G}=&(B_r \times B_r)\cup(B_r^c \times B_r)\cup(B_r \times B_r^c)\cup(B_r^c \times B_r^c)\\
=&(B_{\frac{r}{2}} \times B_{\frac{r}{2}})\cup(B_{\frac{r}{2}} \times (B_r\setminus B_{\frac{r}{2}}))\cup((B_r\setminus B_{\frac{r}{2}}) \times B_{\frac{r}{2}})\\
&\cup((B_r\setminus B_{\frac{r}{2}}) \times (B_r\setminus B_{\frac{r}{2}}))\cup(B_r^c \times B_{\frac{r}{2}})\cup(B_r^c \times (B_r\setminus B_{\frac{r}{2}}))\\
&\cup(B_{\frac{r}{2}} \times B_r^c)\cup((B_r\setminus B_{\frac{r}{2}}) \times B_r^c)\cup(B_r^c \times B_r^c),
\end{align*}
where $B_r:=B_r(x_0)$ and $B_r^c:=\G\setminus B_r(x_0)$. Accordingly, using the definition of $T$ and the symmetry of $x$ and $y$, we get that
\begin{equation}\label{est1}
\int_{\G}\int_{\G}\frac{|T(x)-T(y)|^p}{|y^{-1}x|^{Q+sp}}\,dxdy=I_1+I_2+I_3+I_4,
\end{equation}
where 
$$
I_1:=\int_{B_{r}\setminus B_{{\frac{r}{2}}}}\int_{B_{r}\setminus B_{{\frac{r}{2}}}}\frac{|T(x)-T(y)|^p}{|y^{-1}x|^{Q+sp}}\,dxdy,
$$
$$
I_2:={2} \int_{B_{r}\setminus B_{{\frac{r}{2}}}}\int_{B^c_{r}} \frac{|T(x)-T(y)|^p}{|y^{-1}x|^{Q+sp}}\,dxdy,
$$
$$
I_3:={2}\int_{B_{{\frac{r}{2}}}}\int_{B_{r}\setminus B_{{\frac{r}{2}}}} \frac{|T(x)-T(y)|^p}{|y^{-1}x|^{Q+sp}}\,dxdy,
$$
$$
I_4:=2\int_{B_{{\frac{r}{2}}}}\int_{B^c_{r}}\frac{|T(x)-T(y)|^p}{|y^{-1}x|^{Q+sp}}\,dxdy.
$$
To achieve the desirable conclusion, we need to estimate the terms $I_1$, $I_2$, $I_3$ and $I_4$. We begin with treating the term $I_1$. Observe that, for any $(x, y)\in (B_{r}\setminus B_{{\frac{r}{2}}})\times(B_{r}\setminus B_{{\frac{r}{2}}})$,
$$
\left||x_0^{-1}x|-|x_0^{-1}y|\right|\leq |y^{-1}x|,
$$
$$
\int_{B_{r}\setminus B_{ {\frac{r}{2}}}} \frac{1}{|y^{-1}x|^{Q-(1-s)p}} \, dx \leq \int_{B_{r+|y^{-1}x_0|}(0)\setminus B_{|y^{-1}x_0|-\frac r 2}(0)} \frac{1}{|z|^{Q-(1-s)p}} \,dz \leq \int_{B_{2r}(0)} \frac{1}{|z|^{Q-(1-s)p}} \,dz.
$$
Therefore, we are able to derive that
\begin{align}\label{i1}
\begin{split}
I_1 &=\int_{B_{r}\setminus B_{{\frac{r}{2}}}}\int_{B_{r}\setminus B_{{\frac{r}{2}}}}\frac{|T(x)-T(y)|^p}{|y^{-1}x|^{Q+sp}}\,dxdy \\
&=\frac{2^p}{r^p} \int_{B_{r}\setminus B_{{\frac{r}{2}}}}\int_{B_{r}\setminus B_{{\frac{r}{2}}}} \frac{\left||x_0^{-1}x|-|x_0^{-1} y|\right|^p}{|y^{-1}x|^{Q+sp}} \, dx dy \\
&\leq \frac{2^p}{r^p} \int_{B_{r}\setminus B_{{\frac{r}{2}}}}\int_{B_{r}\setminus B_{ {\frac{r}{2}}}} \frac{|y^{-1}x|^p}{|y^{-1}x|^{Q+sp}} \, dx dy\\
& \leq  \frac{2^p}{r^p} \int_{B_{r}\setminus B_{{\frac{r}{2}}}}\int_{B_{2r}(0)} \frac{1}{|z|^{Q-(1-s)p}} \,dzdy\\
&=\frac{2^{p(2-s)}\omega_Q^2}{(1-s)p}\left(1-\frac{1}{2^Q}\right) r^{Q-ps}.
\end{split}
\end{align}
Let us now deal with the term $I_2$. We first note that, for any $y\in B_{r}\setminus B_{{\frac{r}{2}}}$, 
\begin{equation}
\int_{B_{r}^c}\frac{1}{|y^{-1}x|^{Q+sp}}\,dx \leq \int_{B_{r-|y^{-1}x_0|}^c(0)}\frac{1}{|z|^{Q+sp}}\,dx.
\end{equation}
Consequently, we conclude that
\begin{align}\label{i2}
\begin{split}
I_2 &= \frac{2^{p+1}}{r^p}{\int_{B_{r}\setminus B_{\frac r2}}\int_{B_{r}^c}\frac{\left|r -|x_0^{-1}y|\right|^p}{|y^{-1}x|^{Q+ps}}}\,dx dy \\
&\leq \frac{2^{p+1}\omega_Q}{r^p}{\int_{B_{r}\setminus B_{\frac r2}}\int_{r-|y^{-1}x_0|}^{+\infty}\frac{\left|r -|x_0^{-1}y|\right|^p}{\xi^{1+ps}}} \,d\xi dy \\
&=\frac{2^{p+1}\omega_Q}{ps r^p} \int_{B_{r}\setminus B_{\frac r2}} \left|r -|y^{-1}x_0|\right|^{(1-s)p} \,dy \\
& \leq \frac{2^{1+ps}\omega_Q^2}{ps}\left(1-\frac{1}{2^Q}\right) r^{Q-ps},
\end{split}
\end{align}
where we used the fact that $r-|x_0^{-1}y| \leq \frac r 2$ for any $y \in B_{r}\setminus B_{\frac r 2}$. We now turn to handle the term $I_3$. We first note that, for any $x\in B_{r}\setminus B_{{\frac{r}{2}}}$,
\begin{equation*} 
\int_{B_{\frac r2}}\frac{1}{|y^{-1}x|^{Q+ps}} \,dy  \leq \int_{B_{|x_0^{-1}x|+\frac r 2}(0) \setminus B_{|x_0^{-1}x|-\frac r 2}(0) }\frac{1}{|z|^{Q+ps}} \,dz \leq \int_{B^c_{|x_0^{-1}x|-\frac r 2}(0) }\frac{1}{|z|^{Q+ps}} \,dz
\end{equation*}
Similarly, arguing as the proof of \eqref{i2}, we have that
\begin{align}\label{i3}
\begin{split}
I_3 & = \frac{2^{p+1}}{r^p}{\int_{B_{\frac r2}}\int_{B_{r}\setminus B_{\frac r2}}\frac{\left||x_0^{-1}x|-\frac r2\right|^p}{|x^{-1}y|^{Q+ps}}}\,dxdy \\
& \leq \frac{2^{p+1}\omega_Q}{r^p}\int_{B_{r}\setminus B_{\frac r2}}\int_{B_{|x_0^{-1}x|-\frac r 2}^c(0)}\frac{\left||x_0^{-1}x|-\frac r2\right|^p }{|z|^{Q+ps}} \,dz\, dx \\
& = \frac{2^{p+1}\omega_Q}{psr^p} \int_{B_{r}\setminus B_{ {r/2}}}\left||x_0^{-1}x|-\frac r 2\right|^{(1-s)p}\,dx \\
& \leq \frac{2^{1+ps}\omega_Q^2}{ps}\left(1-\frac{1}{2^Q}\right) r^{Q-ps}.
\end{split}
\end{align}
We next estimate the term $I_4$. Observe that, for any $y \in B_{\frac r 2}$,
\begin{equation*} 
\int_{B_{r}^c}\frac{1}{|y^{-1}x|^{Q+ps}}\,dx \leq \int_{B_{r -|y^{-1}x_0|}^c(0)}\frac{1}{|z|^{Q+ps}}\,dz.
\end{equation*}
Thereby, we are able to obtain that
\begin{align}\label{i4}
I_4 & =  2{\int_{B_{\frac r 2}}\int_{B_{r}^c}\frac{1}{|y^{-1}x|^{Q+ps}}}\,dxdy \\
& \leq 2 \int_{B_{\frac r 2}}\int_{B_{r -|y^{-1}x_0|}^c(0)}\frac{1}{|z|^{Q+ps}}\,dz dy\\
&= \frac{2\omega_Q}{ps}\int_{B_{\frac r 2}} \frac{1}{\left(r-|x_0^{-1}y|\right)^{ps}} \,dy \\
& \leq \frac{2^{1+ps-Q}\omega_Q^2}{ps}r^{Q-ps}.
\end{align}
Finally, invoking \eqref{i1}, \eqref{i2}, \eqref{i3} and \eqref{i4}, we then derive the estimate \eqref{est}.
This completes the proof.    
\end{proof}

Now, relying on Lemma \ref{lmest}, we are going to present the proof of Theorem \ref{mainthmintrnon}.

\begin{proof}[Proof of Theorem \ref{mainthmintrnon}] 
As mentioned earlier, we will  adapt Theorem \ref{ricthm} to prove this theorem. Here the space $X^{s, p}_0(\Omega)$ is equipped with the norm given by 
$$ 
\|u\|_{X^{s, p}_0(\Omega)}:= \left(\int_{\G}\int_{\G} \frac{|u(x)-u(y)|^{p}}{|y^{-1}x|^{Q+sp}} dx dy\right)^{\frac{1}{p}},\quad u \in \X.
$$
To make use of Theorem \ref{ricthm}, we define respectively the functionals $\Phi: \X \rightarrow \R$ and $\Psi : \X \rightarrow \R$ by
$$
\Phi(u):=  \frac{1}{p}\int_{\G}\int_{\G} \frac{|u(x)-u(y)|^{p}}{|y^{-1}x|^{Q+sp}}\, dxdy,
$$
$$
\Psi(u):=\int_{\Omega} \beta(x) G(u)\,dx.
$$
And we set $J_{\lambda}:= \Phi -\lambda \Psi$. 

It is easy to see that the functional $\Phi$ is G\^{a}teaux differentiable and its G\^{a}teaux derivative is given by 
$$
\langle \Phi'(u), \varphi \rangle:= \int_{\G}\int_{\G} \frac{|u(x)-u(y)|^{p-2}(u(x)-u(y))(\varphi(x)-\varphi(y))}{|y^{-1}x|^{Q+sp}} \,dx dy, \quad \forall \,\,\varphi \in  \X.
$$ 
Obviously, $\Phi$ is strongly continuous, sequentially weakly lower semicontinuous, coercive and 
$\inf_{u \in \X} \Phi(u)=0$.
In addition, the function $\Psi$ is G\^{a}teaux differentiable and its G\^{a}teaux derivative is given by
$$
\langle \Psi'(u), \varphi \rangle:=\int_{\Omega} \beta(x) g(u) \varphi \,dx, \quad \forall\,\, \varphi \in \X.
$$ 
Next, we are going to show that $\Psi$ is sequentially weakly upper semicontinuous by utilizing Theorem \ref{emthm2}. Indeed, by the arguments developed in \cite{BB, Ric00}, we only need to prove that $\Psi$ is sequentially weakly upper semicontinuous up to subsequences. Let $\{u_n\} \subset \X$ be such that $u_n \wto u_0$ an $n \to \infty$ in $\X$. 
It follows from Theorem \ref{thm1} that there exists $C>0$ such that $\|u_n\|_{C(\overline{\Omega})} \leq C$ and $u_n \to u$ as $n \to \infty$ in $C(\overline{\Omega})$.
Since $G$ is continuous, then $G(u_n(x)) \to G(u(x))$ as $n \to \infty$ for any $x \in \G$.
Observe that  
$$
|\beta(x) G(u_n(x))| \leq \|\beta\|_{L^\infty(\Omega)} \max_{|\tau| \leq C} |G(\tau)|, \quad \forall \,\, x \in \Omega.
$$
Employing the Lebesgue dominated convergence theorem, we then have that $\lim_{n \to \infty} \Psi(u_n)=\Psi(u)$. This in turn leads to the desirable conclusion.

Let $\{a_n\} \subset \R_{>0}$ be such that 
$$
\lim_{n \rightarrow \infty} a_n =M, \quad  \lim_{n \rightarrow \infty} \frac{\max_{|\tau| \leq a_n} G(\tau)}{a_n^p} =A_M, \quad M \in \{0^+, +\infty\},
$$
where
$$
A_M:= \liminf_{\tau \rightarrow M} \frac{\max_{|\xi| \leq \tau} G(\xi)}{\tau^p}.
$$
Define $b_n:= \frac{a_n^p}{p L^p}>0$, where
$$
L:=\sup\left\{\frac{\|u\|_{L^\infty(\Omega)}}{\|u\|_{X_0^{s,p}(\Omega)}}: u \in X_0^{s, p}(\Omega) \backslash \{0\} \right\}>0.
$$ 
It is clear to see that $\lim_{n \to \infty} b_n=M$. Applying the fact that $\X$ is continuously embedded into $C(\overline{\Omega})$ by Theorem \ref{thm1}, we then derive that $L<\infty$. Then, for any $v \in \Phi^{-1}(-\infty, b_n)$, there holds that
\begin{align} \label{anest}
\|v\|_{L^\infty(\Omega)} \leq a_n, \quad \forall \,\, n \in \Z_{>0}.
\end{align}
It then yields that, for any $v \in \Phi^{-1}(-\infty, b_n)$,
\begin{align} \label{anest1}
\Psi(v)=\int_{\Omega} \beta(x) G(v)\,dx \leq \|\beta\|_{L^{\infty}(\Omega)} |\Omega| \max_{|\tau| \leq a_n} G(\tau).
\end{align}
Since $\Phi(0)=\Psi(0)=0$, then $0 \in \Phi^{-1}((-\infty, b_n))$. This along with \eqref{anest} and \eqref{anest1} leads to
\begin{align}
\varphi(b_n)= \inf_{u\in\Phi^{-1}((-\infty, b_n))}\frac{\displaystyle\sup_{v\in\Phi^{-1}((-\infty,b_n))}\Psi(v)-\Psi(u)}{b_n-\Phi(u)} 
&\leq \frac{\displaystyle\sup_{v\in\Phi^{-1}((-\infty,b_n))}\Psi(v)}{b_n} \\
&\leq p \|\beta \|_{L^\infty(\Omega)} |\Omega| L^p \frac{\max_{|\tau| \leq a_n} G(\tau)}{a_n^p}.
\end{align}
Therefore, if $0<\lambda < \lambda_{2, M}$, then
$$
\alpha_M \leq \liminf_{n \rightarrow \infty} \varphi(b_n) \leq p \|\beta\|_{L^\infty(\Omega)} |\Omega| L^p A_M <\frac{1}{\lambda}<+\infty,
$$
where 
$$
\lambda_{2, M}:=\frac{1}{p \|\beta\|_{L^\infty(\Omega)} |\Omega| L^p A_M},
$$
$$
\alpha_M:= \begin{cases}
\gamma:= \liminf_{r \rightarrow +\infty} \varphi(r)\quad &\text{if} \quad M=+\infty \\ \delta:= \liminf_{r \rightarrow 0^+} \varphi(r)\quad & \text{if} \quad M=0^+. 
\end{cases}
$$
It then indicates that, if $0<\lambda < \lambda_{2, M}$, then $0<\lambda<1/\alpha_M$.

Now, we are going to show that, if $M=+\infty$, then the function $J_\lambda$ is unbounded from below. Let $\{\xi_n\} \subset \R$ be such that $\lim_{n\rightarrow \infty} \xi_n =+\infty$ and 
\begin{equation} \label{eqq4.11}
\lim_{n \rightarrow \infty} \frac{G(\xi_n)}{\xi_n^p}=B_\infty,
\end{equation}
where
$$
B_{\infty}:=\limsup_{\tau \to +\infty} \frac{G(\tau)}{\tau^p}.
$$
Define $W_n:= \xi_n T \in \X$, where the function $T: \G \to \R$ is defined by \eqref{testfun}. In view of Lemma \ref{lmest}, we readily have that 
\begin{align} \label{w1}
\Phi(W_n) = \frac{1}{p} \int_{\mathbb{G} \times \mathbb{G}} \frac{|W_n(x)-W_n(y)|^p}{|y^{-1}x|^{Q+sp}}\, dx dy \leq C_{p, Q, s} \omega_Q^2 \frac{r^{Q-sp}}{p} \xi_n^p.
\end{align} 
Moreover, since we assumed that $\inf_{\tau \geq 0} G(\tau)=0$ and $\beta_0:= \text{essinf}_{x \in \Omega} \beta (x)>0$, then there holds that
\begin{equation} \label{w2}
\Psi(W_n)= \int_\Omega \beta(x) G(W_n) \,dx \geq \beta_0 \int_{B_{\frac{r}{2}}(x_0)} G(W_n) \,dx \geq  \beta_0 \omega_Q  \frac{r^Q}{2^Q} G(\xi_n)>0.
\end{equation} 
Therefore, by \eqref{w1} and \eqref{w2}, we conclude that 
\begin{equation} \label{e4.12}
J_\lambda(W_n)=\Psi(W_n)-\lambda \Psi(W_n) \leq C_{p, Q, s} \omega_Q^2 \frac{r^{Q-sp}}{p} \xi_n^p- \lambda \beta_0  \omega_Q \frac{r^Q}{2^Q} G(\xi_n).
\end{equation} 
Let $\lambda>\lambda_{1, \infty}$, then
$$
0<\lambda':=\frac{2^Q C_{p, Q, s} \omega_Q }{\lambda p r^{sp} \beta_0 B_\infty}<1,
$$
where
$$
\lambda_{1, \infty}:=\frac{2^Q C_{p, Q, s} \omega_Q }{p r^{sp} \beta_0 B_\infty}.
$$
If $B_\infty<+\infty$, we then take $\eps>0$ satisfy $\lambda'<\eps<1$.
It then follows from \eqref{eqq4.11} that there exists $N_\epsilon \in \Z_{>0}$ such that 
$$
G(\xi_n) > \epsilon B_\infty \xi^p_n, \quad \forall\,\, n>N_{\eps}.
$$
Therefore, by \eqref{e4.12}, we conclude that 
\begin{align*}
J_\lambda(W_n) 
\leq \left(C_{p, Q, s} \frac{\omega_Q}{p r^{ps}}-\lambda \epsilon B_\infty \frac{\beta_0}{2^Q}  \right) r^Q \omega_Q \xi_n^p, \quad \forall\,\, n>N_{\eps}.
\end{align*}
Since $\eps>\lambda'$, then
$\lim_{n\rightarrow \infty} J_\lambda(W_n)= - \infty$.
If $B_\infty=+\infty$, we then take $\widetilde{\eps}>0$ satisfy $\widetilde{\eps}>\widetilde{\lambda}$, where
$$
\widetilde{\lambda}:=\frac{2^Q C_{p, Q, s} \omega_Q}{\lambda p r^{ps} \beta_0}.
$$
Again, using \eqref{eqq4.11}, we know that that there exists $N_{\widetilde{\eps}} \in \Z_{>0}$ such that 
$$
G(\xi_n)>\widetilde{\eps} \xi_n^p, \quad \forall\,\, n>N_{\eps'}.
$$ 
Therefore, applying \eqref{e4.12}, we get that
\begin{align*}
J_\lambda(W_n) 
\leq \left(C_{p, Q, s}  \frac{\omega_Q}{r^{sp} p} -\lambda \widetilde{\eps} \frac{\beta_0}{2^Q} \right) r^Q \omega_Q \xi_n^p, \quad \forall\,\, n>N_{\widetilde{\eps}}.
\end{align*}
Since $\widetilde{\eps}>\widetilde{\lambda}$, then $\lim_{n\rightarrow \infty} J_\lambda(W_n)= - \infty$.
At this point, taking into account the assertion $(\textnormal{i})$ of Theorem \ref{ricthm}, we have that the functional $J_\lambda$ admits an unbounded sequence of critical points $\{u_{\lambda, n}\} \subset \X$ such that 
$$
\lim_{n \rightarrow \infty} \|u_{\lambda, n}\|_{X^{s,p}_0(\Omega)}=+\infty.
$$
Next, we are going to consider the case $M=0^+$. In this case, by similar arguments as above, we can show that $u_0=0$ is not a local minimum of $J_\lambda$. Thereby, taking advantage of the assertion $(\textnormal{ii})$ of Theorem \ref{ricthm}, we conclude that the functional $J_\lambda$ admits a sequence of the pairwise distinct critical point (local minima) $\{u_{\lambda, n}\} \subset \X$ such that 
$$
\lim_{n \rightarrow \infty} \|u_{\lambda, n}\|_{X^{s,p}_0(\Omega)}=0.
$$
As a consequence of Theorem \ref{thm2}, we get that $\lim_{n \rightarrow \infty} \|u_{\lambda, n}\|_{L^\infty(\Omega)}=0$. This completes the proof.  
\end{proof}

Finally, we are going to present the proof of Theorem \ref{color1.1}.

\begin{proof}[Proof of Theorem \ref{color1.1}] Since $g$ is a nonnegative function with $g(0)=0,$ we then adapt Theorem \ref{mainthmintrnon} to the nonlocal subelliptic problem \begin{equation} \label{corsec}
\begin{cases} 
(-\Delta_{\mathbb{G}, p})^s u= \lambda \beta(x) g^+(u), & \text{in} \quad \Omega \\
u=0\quad & \text{in}\quad \mathbb{G}\backslash \Omega,
\end{cases}
\end{equation}
where $g^+:\R \rightarrow \R$ is defined by 
$$
g^+(\tau):= \begin{cases}
g(\tau) \quad & \mbox{if}\,\,\tau>0 \\
0 \quad & \mbox{if}\,\, \tau \leq 0,
\end{cases}
$$
which is continuous for any $\tau \in \R$. Using the nonnegativity of $g$, we then see that 
$$ 
\max_{|\xi| \leq \tau} \int_{0}^\xi g^+(s)\,ds = \max_{|\xi| \leq \tau} \int_0^\xi g(w)\,ds =G(\tau), \quad \forall\,\, \tau \geq 0.
$$ 
At this point, using the assumptions,  we then get $A_M=0$ and $B_M=+\infty.$ Therefore, the conclusion of  Theorem \ref{mainthmintrnon} holds with $\lambda_1^M =0$ and $\lambda_2^M=+\infty.$ Hence, to conclude the proof, we have to show that the solutions are nonnegative. For this, we set $a^{\pm}:=\max \{0, \pm a \}$ for any $a \in \R.$ Let $u \in \X$ be a solution to \eqref{corsec}. Then there holds that $u^\pm \in \X$ and 
\begin{align} \label{ee}
\int_{\Omega}\beta(x) g^+(u) u^-(x)=0.
\end{align}
Observe that
\begin{align} \label{ee1}
|a^--b^-|^p \leq |a-b|^{p-2}(a-b)( b^- -a^- ), \quad \forall \,\, a, b \in \R.
\end{align}
Taking into account \eqref{ee} and \eqref{ee1} 
we then derive that 
\begin{align*}
0= \langle \mathfrak{I}_\lambda'(u), -u^- \rangle 
&= \int_{\G}\int_{\G} \frac{|u(x)-u(y)|^{p-2}(u(x)-u(y))(u^-(y)-u^-(x))}{|y^{-1}x|^{Q+sp}}\, dx dy \\
& \geq \int_{\G}\int_{\G}  \frac{|u^-(x)-u^-(y)|^p}{|y^{-1}x|^{Q+sp}}\,dx dy,
\end{align*}
which implies that $u^-=0$ on $\G.$ Therefore, we have that $u \geq 0$. This completes the proof. 
\end{proof}

\section*{Declarations}
\subsection*{Data availability}

Data sharing not applicable to this article as no datasets were generated or analysed during the current study.

\subsection*{Conflict of interest}

The authors confirm that there is no conflict of interest.

\subsection*{Acknowledgments} 
T. Gou was supported by the National Natural Science Foundation of China (No. 12101483) and the Postdoctoral Science Foundation of China (No. 2021M702620). T. Gou thanks Prof. Louis Jeanjean for useful discussions on applications of the embeddings. S. Ghosh acknowledges the research facilities available at the Department of Mathematics, NIT Calicut under the DST-FIST support, Govt. of India [Project no. SR/FST/MS-1/2019/40 Dated. 07.01 2020.].  V. Kumar is supported by the FWO Odysseus 1 grant G.0H94.18N: Analysis and Partial Differential Equations, the Methusalem programme of the Ghent University Special Research Fund (BOF) (Grant number 01M01021) and by FWO Senior Research Grant G011522N. V.D. R\u adulescu was supported  by   grant ``Nonlinear Differential Systems
in Applied Sciences" of the Romanian Ministry of Research, Innovation and Digitization, within
PNRR-III-C9-2022-I8/22.


\begin{thebibliography}{99}

\bibitem{AM} {A. Adimurthi, A. Mallick}: {\it A Hardy type inequality on fractional order Sobolev spaces on the Heisenberg group}, Ann. Sc. Norm. Super. Pisa Cl. Sci. (5) 18 (3) (2018) 917-949. 


\bibitem{ADM18} {V. Ambrosio, L. D’Onofrio, G. M. Bisci:} {\it On nonlocal fractional Laplacian problems with oscillating potentials}, Rocky Mountain J. Math. 48 (5) (2018) 1399-1436.

\bibitem{AMRT} {F. Andreu, J.M. Maz\'on, J.D. Rossi, J.J. Toledo:} {\it Nonlocal Diffusion Problems}, Mathematical Surveys and Monographs, Amer. Math. Soc., Philadelphia, 2010.

\bibitem{BFG} {H. Bahouri, C. Fermanian-Kammerer, I. Gallagher:} {\it Dispersive estimates for the Schr\"odinger
operator on step-2 stratified Lie groups}, Anal. PDE 9 (3) (2016) 545-574.

\bibitem{BB} {G. Bonanno. G.M. Bisci:} {\it Infinitely many solutions for a boundary value problem with discontinuous nonlinearities}, Bound. Value Probl. 2009 (2009) 1-20.

\bibitem{BB1} {G. Bonanno. G.M. Bisci:} {\it Infinitely many solutions for a Dirichlet problem involving the
p-Laplacian}, Proc. Roy. Soc. Edinburgh Sect. A 140 (4) (2010) 737-752.

\bibitem{BLU07} {A. Bonfiglioli, E. Lanconelli, F. Uguzzoni:} {\it Stratified {L}ie {G}roups and {P}otential {T}heory for their {S}ub-{L}aplacians}, Springer, Berlin, Heidelberg, 2007.

\bibitem{BF} {G.M. Bisci, M. Ferrara:} {\it Subelliptic and parametric equations on Carnot groups}, Proc. Amer. Math. Soc. 144 (7) (2016) 3035-3045.

\bibitem{MP} {G. M. Bisci,  P. Pizzimenti:} {\it Sequences of weak solutions for non-local elliptic problems with Dirichlet boundary condition}, Proc. Edinb. Math. Soc. 57 (3) (2014) 779-809.

\bibitem{BRS} {G. M. Bisci, V. D. R\u{a}dulescu, R.  Servadei:} {\it Variational methods for nonlocal fractional problems}, Cambridge University Press, 2016.

\bibitem{BF13} {T. H. Branson, L. Fontana, C. Morpurgo:} {\it Moser-{T}rudinger and {B}eckner-{O}nofri's inequalities on the {CR} sphere},  Ann. of Math. (2) 177 (1) (2013) 1-52.

\bibitem{Br} {H. Brezis:} {\it Functional Analysis, Sobolev Spaces and Partial Differential Equations}, Springer, New York, 2011.

\bibitem{B} {D. Bump:} {\it Lie groups}, Graduate Texts in Mathematics 225, Springer, 2004.

\bibitem{CDG} {L. Capogna, D. Danielli, N. Garofalo:} {\it An embedding theorem and the Harnack inequality for
nonlinear subelliptic equations}, Comm. Partial Differential Equations 18 (9-10) (1993) 1765-1794.

\bibitem{CG} {L. Capogna, N. Garofalo:} {\it Boundary behavior of nonnegative solutions of subelliptic equations in NTA domains for Carnot-Carath\'eodory metrics}, J. Fourier Anal. Appl. 4 (4-5) (1988) 403-432.

\bibitem{Eli} {E. Carta:} {\it Sur la repr\'esentation g\'eom\'etrique des syst\'emes mat\'eriels non holonomes}, Proc. Internat. Congress Math., vol. 4, Bologna, (1928) 253-261

\bibitem{Ch} {D. Christodoulou:} {\it On the geometry and dynamics of crystalline continua}, Ann. Inst. H. Poincar\'e Phys. Th\'eor. 69 (3) (1998) 335-358. 

\bibitem{CCR} {P. Ciatti, M. G. Cowling, F. Ricci:} {\it Hardy and uncertainty inequalities on stratified Lie groups},
Adv. Math. 277 (2015) 365-387.

\bibitem{CMS} {G. Citti, G. Manfredini, A. Sarti:} {\it Neuronal oscillations in the visual cortex: $\Gamma$- convergence to the Riemannian Mumford–Shah functional}, SIAM J. Math. Anal. 35 (6) (2004) 1394-1419.

\bibitem{DGN} {D. Danielli, N. Garofalo, D. Nhieu:} {\it Sub-Riemannian calculus on hypersurfaces in Carnot
groups}, Adv. Math. 215 (1) (2017) 292-378.

\bibitem{DPV} {E. Di Nezza, G. Palatucci, E. Valdinoci:} {\it Hitchhiker's guide to the fractional Sobolev spaces}, Bull. Sci. Math. 136 (5) (2012) 521-573.

\bibitem{DL} {G. Duvaut, J.-L. Lions:} {\it Inequalities in Mechanics and Physics}, Springer-Verlag, Berlin, 2012.

\bibitem{Ev} {L. C. Evans:} {\it Partial Differential Equations}, Providence, Amer. Math. Soc., 2010.

\bibitem{FZ24} {Y. Fang, C. Zhang:} 
 {\it Regularity theory for nonlocal equations with general growth in the Heisenberg group},  International Mathematics Research Notices, rnae072, (2024).

\bibitem{Fd} {C. Fefferman, R. de la Llave:} {\it Relativistic stability of matter. I}, Rev. Mat. Iberoam. 2 (1-2) (1986) 119-213

\bibitem{FF}{F. Ferrari, B. Franchi:} {\it Harnack inequality for fractional sublaplacians in Carnot groups}, Math. Z. 279 (1) (2015) 435-458.

\bibitem{FMPPS18} {F. Ferrari, M. Miranda Jr, D. Pallara, A. Pinamonti, Y. Sire:} {\it Fractional Laplacians, perimeters and heat semigroups in Carnot groups}, Discrete Contin. Dyn. Syst. Ser. S 11 (3) (2018) 477-491.

\bibitem{FR} {V. Fischer, M. Ruzhansky:} {\it Quantization on nilpotent Lie groups}, Progress in Mathematics, Birkh\"auser, Boston, 2016.

\bibitem{Fo} {G.B. Folland:} {\it Subelliptic estimates and function spaces on nilpotent Lie groups}, Ark. Mat. 13 (1-2) (1975) 161-207.

\bibitem{Fo1} {G.B. Folland:} {\it On the Rothschild–Stein lifting theorem}, Comm. Partial Differential Equations
2 (2) (1977) 165-191.

\bibitem{FS} {G.B. Folland, E.M. Stein}: {\it Hardy Spaces on Homogeneous Groups}, Mathematical Notes, vol. 28.
Princeton University Press, Princeton, 1982.

\bibitem{FMMT15} {R.L. Frank, M. del Mar Gonz\`alez, D.D. Monticelli, J. Tan:} {\it  An extension problem for the CR fractional Laplacian}, Adv. Math. 270 (2015) 97-137.

\bibitem{GMR} {B. Gabrov\u{s}ek,  G.M. Bisci, D.D. Repov\u{s}:} {\it  On nonlocal Dirichlet problems with oscillating terms}, Discrete Contin. Dyn. Syst. Ser. S 16 (6) (2023) 1401-1413.  

\bibitem{GT21} {N. Garofalo, G. Tralli:} {\it Feeling the heat in a group of Heisenberg type}, Adv. Math. (381) (2021) Art. 107635.

\bibitem{GT24} {N. Garofalo and G. Tralli:} {\it A universal heat semigroup characterisation of Sobolev and BV spaces in Carnot groups}, Int. Math. Res. Not. IMRN 2024 (8) (2024) 6731-6758.

\bibitem{GLV} {N. Garofalo, A. Loiudice, D. Vassilev:} {\it Optimal decay for solutions of nonlocal semilinear equations with critical exponent in homogeneous group}, Proc. Roy. Soc. Edinburgh Sect. A (2024), doi:10.1017/prm.2024.58.

\bibitem{GQ13} {M. d. M. Gonz\'alez, J. Qing:}  {\it  Fractional conformal Laplacians and fractional Yamabe problems}, Anal. PDE 6 (7) (2013) 1535-1576.

\bibitem{GKR} {S. Ghosh, V. Kumar, M. Ruzhansky:} {\it Compact embeddings, eigenvalue problems, and subelliptic Brezis–Nirenberg equations involving singularity on stratified Lie groups},  Math. Ann. 388 (4) (2024) 4201-4249.

\bibitem{GKR23} {S. Ghosh, V. Kumar, M. Ruzhansky:} {\it Best constants in subelliptic fractional  Sobolev and Gagliardo-Nirenberg inequalities and ground states on stratified Lie groups}, arXiv.2306.07657.

\bibitem{GKR24} {S. Ghosh, V. Kumar, M. Ruzhansky:} {\it Subelliptic Nonlocal Brezis-Nirenberg Problem on Stratified Lie Groups}, arXiv:2409.03867.  

\bibitem{G} {M. Gromov:} {\it Carnot-Carath\'eodory Spaces Seen from Within Sub-Riemannian Geometry}, Birkh\"auser, Basel, pp. 79–323, 1996.

\bibitem{KS} {A. Kassymov, D. Suragan:} {\it Lyapunov-type inequalities for the fractional p-sub-Laplacian}, Adv. Oper. Theory 5 (2) (2020) 435-452.

 \bibitem{K20} {A. Krist\'aly:} {\it Nodal solutions for the fractional Yamabe problem on Heisenberg groups}, Proc. Roy. Soc. Edinburgh Sect. A 150 (2) (2020) 771-788.

\bibitem{KM} {A. Krist\'aly, G. Moro\c{s}anu:} {\it New competition phenomena in Dirichlet problems}, J. Math. Pure. Appl. 94 (6) (2010) 555–570.

\bibitem{KMT} {A. Krist\'aly, G. Moro\c{s}anu,  S. Tersian}: {\it  Quasilinear elliptic problems in $\R^\N$ involving oscillatory nonlinearities}, J. Differential Equations 235 (2) (2007) 366–375.

\bibitem{MM} {J. J. Manfredi, G. Mingione:} {\it Regularity results for quasilinear elliptic equations in the Heisenberg group}, Math. Ann. 339 (3) (2007) 485-544.

\bibitem{MMPP23} {M. Manfredini, G. Palatucci, M. Piccinini, S. Polidoro:} {\it Hölder continuity and boundedness
estimates for nonlinear fractional equations in the Heisenberg group},  J. Geom. Anal. 33 (3) (2023) Art. 77.

\bibitem{OO} {F. Obersnel, P. Omari}: {\it Positive solutions of elliptic problems with locally oscillating nonlinearities}, J. Math. Anal. Appl. 323 (2) (2006) 913-929.

\bibitem{OZ} {P. Omari, F. Zanolin:} {\it Infinitely many solutions of a quasilinear elliptic problem with an oscillatory potential}, Comm. Partial Differential Equations 21 (5-6) (1996) 721-733.

\bibitem{Pansu} {P. Pansu:} {\it Carnot-Carathéodory metrics and quasi-isometries of rank-one symmetric spaces}, Ann. of Math. (2) 129 (1989) 1-60.

\bibitem{PP22} {G. Palatucci and M. Piccinini:} {\it  Nonlocal Harnack inequalities in the Heisenberg group}, Calc. Var. Partial Differential Equations, 61(5) 185, (2022) 30 pp.

\bibitem{Pic22} {M. Piccinini:}  {\it The obstacle problem and the Perron Method for nonlinear fractional equations in
the Heisenberg group}, Nonlinear Anal. 222 (2022) Art. 112966.



\bibitem{Ric00} {B. Ricceri:} {\it A general variational principle and some of its applications, fixed point theory with applications in nonlinear analysis}, J. Comput. Appl. Math. 113 (1-2) (2000) 401-410.

\bibitem{RT16} {L. Roncal, S. Thangavelu:}  {\it Hardy’s inequality for fractional powers of the sub-{L}aplacian on the {H}eisenberg group}, Adv. Math. 302 (2016) 106-158.

\bibitem{RT20} {L. Roncal, S. Thangavelu},  {\it An extension problem and trace {H}ardy inequality for the sub-{L}aplacian on {H}-type groups}, Int. Math. Res. Not. IMRN  2020 (14) (2020) 4238-4294.

\bibitem{RS} {L. P. Rothschild, E.M. Stein:} {\it Hypoelliptic differential operators and nilpotent groups}, Acta
Math. 137 (3-4) (1976) 247–320.

\bibitem{Ro} {L. P. Rothschild:} {\it A remark on hypoellipticity of homogeneous invariant differential operators on
nilpotent Lie groups}, Comm. Partial Differential Equations 8 (15) (1983) 1679-1682.

\bibitem{RS1} {M. Ruzhansky, D. Suragan:} {\it Hardy inequalities on homogeneous groups: 100 years of Hardy
inequalities}, Progress in Mathematics, Birkh\"auser, 2019.

\bibitem{RTY} {M. Ruzhansky, N. Tokmagambetov, N. Yessirkegenov:} {\it Best constants in Sobolev and
Gagliardo-Nirenberg inequalities on graded groups and ground states for higher order nonlinear subelliptic equations}, Calc. Var. Partial Differential Equations, 59 (5) (2020) 1–23, Paper No. 175.


\bibitem{T} {P. Tankov:} {\it Financial Modelling with Jump Processes}, Chapman and Hall, CRC Financial Mathematics Series, 2003.

\bibitem{Zh}{Y. Zhou:} {\it Fractional Sobolev extension and Imbedding}, Trans. Amer. Math. Soc.  367 (2) (2015) 959-979.


\end{thebibliography}
\end{document}